\numberwithin{equation}{section}
\newtheorem{theorem}{Theorem}[section]
\newtheorem{corollary}[theorem]{Corollary}
\theoremstyle{definition}
\newtheorem{definition}[theorem]{Definition}
\theoremstyle{remark}
\newtheorem{remark}[theorem]{Remark}
\newtheorem{example}{Example}
\numberwithin{equation}{section}
\DeclareMathOperator{\RE}{Re}
\begin{document}
	
	\title[\tiny{A Novel Class of Starlike Functions}]{A Novel Class of Starlike Functions}

	\author[S. Sivaprasad Kumar]{S. Sivaprasad Kumar}
	\address{Department of Applied Mathematics, Delhi Technological University, Delhi--110042, India}
	\email{spkumar@dce.ac.in}

	\author[S. Banga]{Shagun Banga}
	\address{Department of Applied Mathematics, Delhi Technological University, Delhi--110042, India}
	\email{shagun05banga@gmail.com}

	\subjclass[2010]{30C45, 30C50}
	
	\keywords{starlike function, subordination, convolution, radius problem, coefficient estimate}
	\maketitle
		\begin{abstract}
		In the past several subclasses of starlike functions are defined involving real part  and modulus of certain expressions of functions under study, combined by way of an inequality. In the similar fashion, we introduce a new class $\mathcal{S}^*_{q}(\alpha)$, consisting of normalized analytic univalent functions $f$ in the open unit disk $\mathbb{D}$,  satisfying $$\RE\left(\dfrac{z f'(z)}{f(z)}\right) > \left|1+\dfrac{z f''(z)}{f'(z)} -\dfrac{z f'(z)}{f(z)}-\alpha\right| \quad (0 \leq \alpha <1).$$ Evidently, $\mathcal{S}^*_{q}(\alpha) \subset \mathcal{S}^*$, the class of starlike functions. We first establish $\mathcal{S}^*_{q}(\alpha) \subset \mathcal{S}^*(q_\alpha)$, the class of analytic functions $f$ satisfying $z f'(z)/f(z)\prec q_\alpha(z),$ where $q_\alpha$ is an extremal function. We obtain certain inclusion and radius results for both the classes. Further, we estimate logarithmic coefficients, inverse coefficients and Fekete-Szeg\"o functional bounds for functions in $ \mathcal{S}^*(q_\alpha)$. 
	\end{abstract}
	\maketitle
	
	\section{Introduction}
	\label{intro}
	Let $\mathcal{A}$ be the class of analytic functions $f(z)= z+a_2 z^2 +a_3 z^3+\cdots,$ defined in a unit disk $\mathbb{D}=\{z\in \mathbb{C}: |z| <1\}$ and the subclass $\mathcal{S}$ of $\mathcal{A}$ consists of univalent functions. A function $f$ is said to be starlike if $k \mu \in f(\mathbb{D})$ for $\mu \in f(\mathbb{D})$ and $k \in [0,1]$. The class of starlike functions of order $\alpha$ $(0 \leq \alpha < 1)$ is defined by
	\begin{equation}\label{salpha} \mathcal{S}^*(\alpha):=\left\{f \in \mathcal{S}: \RE \dfrac{z f'(z)}{f(z)} > \alpha\right\}.\end{equation}
	For $\alpha=0$, $\mathcal{S}^*(0)=:\mathcal{S}^*$, the class of starlike functions. A function $f$ is said to be convex if $k \mu_1+(1-k) \mu_2 \in f(\mathbb{D})$ for $\mu_1$, $\mu_2 \in f(\mathbb{D})$ and $k \in [0,1]$. The class of convex functions of order $\alpha$ $(0 \leq \alpha <1)$ is defined by $$ \mathcal{K}(\alpha):=\left\{f \in \mathcal{S}: 1+\dfrac{z f''(z)}{f'(z)} > \alpha\right\}.$$ Note that $\mathcal{K}(0)=:\mathcal{K}$, the class of convex functions. Robertson~\cite{robert} introduced the classes $\mathcal{S}^*(\alpha)$ and $\mathcal{K}(\alpha)$ and functions in either of them fail to be univalent for $\alpha<0$. Moreover, there is a two way bridge  between these classes: $f \in \mathcal{K}(\alpha)$ if and only if $zf'(z) \in \mathcal{S}^*(\alpha)$. Another interesting class $\mathcal{M}(\beta) \subset \mathcal{A}$, studied by Uralegaddi~\cite{urla} is characterized by $\RE(z f'(z)/f(z)) < \beta$ $(\beta > 1)$. Carath\'{e}odory class, denoted by $\mathcal{P}$, consists of the functions $p(z)=1+p_1 z+p_2 z^2+\cdots,$ having positive real part in $\mathbb{D}$. Let $\mathcal{Q}(\alpha):=\{f \in \mathcal{A}: \RE\left(f(z)/z\right) > \alpha\}$ $(0 \leq \alpha <1).$ 	Recall $\mathcal{S}^*(1/2) \subset \mathcal{Q}(1/2)$ from~\cite[p.~73]{duren} and the constant $1/2$ is the best possible. Ma and Minda~\cite{ma} introduced the classes 
	$$\mathcal{S}^*(\phi)=\left\{f \in \mathcal{S}: \dfrac{ z f'(z)}{f(z)} \prec \phi(z)\right\} \text{ and } \mathcal{K}(\phi)=\left\{f \in \mathcal{S}: 1+\dfrac{z f''(z)}{f'(z)} \prec \phi(z)\right\},$$ where $\phi$ is analytic univalent function, which has positive real part in $\mathbb{D}$, $\phi(\mathbb{D})$ is symmetric with respect to real axis, $\phi'(0)>0$ and is starlike with respect to $\phi(0)=1$. The class $\mathcal{S}^*(\phi)$ reduces to various known subclasses of starlike functions on having particular functions in place of $\phi(z)$. For example $\phi(z)= (1+(1-2\alpha)z)/(1-z)$, it becomes starlike class of order $\alpha$, given in~\eqref{salpha}. Let $\phi(z)= \sqrt{1+z}$, then it reduces to $\mathcal{S}^*(\sqrt{1+z})$, introduced in~\cite{Sok}. The class $\mathcal{S}^*_{e}$, introduced by Mendiratta et~al.~\cite{men} when $\phi(z)= e^z$. Goel and Kumar~\cite{priyanka} introduced the class $\mathcal{S}^*_{SG}$ for $\phi(z) = 2/(1+e^{-z})$. The class $\mathcal{S}^*_{S}$ is obtained by taking $\phi(z)= 1+\sin(z)$, which is introduced in~\cite{cho}. Let $\phi(z)= 1+4z/3+2z^2/3$, then we get $\mathcal{S}^*_{C}$, first studied in~\cite{sharma}. Kumar and Banga~\cite{banga} introduced and studied the class $\mathcal{S}^*_{l}$, which is obtained by considering a special type of Ma-Minda function $\Phi(z)=1-\log(1+z)$.
	
	If there exist a Schwarz function $\upsilon(z)$ such that $f(z)=g(\upsilon(z))$ for two analytic functions $f$ and $g$, then we say $f$ is subordinate to $g$, written as $f \prec g$. Specifically, for the case when $g$ is univalent, we say $f \prec g$ if and only if $f(\mathbb{D}) \subset g(\mathbb
	{D})$ and $f(0)=g(0)$. In the recent past authors study certain type of inequalities in a complex plane (see~\cite{new3,rjm,new2}) and its applications. Moreover, many new subclasses of $\mathcal{S}^*$ were introduced and studied extensively (see~\cite{new,ron}). Motivated by these, we introduce another interesting subclass of $\mathcal{S}^*$:
	\begin{definition}
		For $f \in \mathcal{S}$, we define below a class $\mathcal{S}^*_{q}(\alpha)$ if $f$ satisfies:
		\begin{equation}\label{class}
			\RE\left(\dfrac{z f'(z)}{f(z)}\right) > \left|1+\dfrac{z f''(z)}{f'(z)} -\dfrac{z f'(z)}{f(z)}-\alpha\right| \quad (0 \leq \alpha <1).
		\end{equation}
	\end{definition}
	We observe that the class $\mathcal{S}_q^*(\alpha) \subseteq \mathcal{S}^*$. The identity function $f(z)=z$ satisfies the inequality \eqref{class} for all $0\leq \alpha <1$, hence $\mathcal{S}_q^*(\alpha) \neq \emptyset$. For $\alpha=0$, we set $\mathcal{S}_q^*(0)=:\mathcal{S}_q^*$. We also obtain $\mathcal{S}_q^*(\alpha)$ as a subclass of another fascinating class of starlike functions, which is introduced in the following section. The paper is structured as follows: In the next section, we show $f \in \mathcal{S}_q^*(\alpha)$ implies 
	$$ \dfrac{z f'(z)}{f(z)} \prec q_\alpha(z),$$ where $q_\alpha$ is given in~\eqref{subor}. This function $q_{\alpha}(z)$ which is brought to day light by studying the class~\eqref{class}, is first introduced by MacGregor~\cite{mac}. Using $q_{\alpha}(z)$, MacGregor~\cite{mac} also proved that convex functions of order $\alpha$ is starlike of order
	$$ \dfrac{2\alpha-1}{2-2^{2(1-\alpha)}}~(\alpha\neq 1/2) \text{ } \text{ and } \text{ } \dfrac{1}{\log 4}~(\alpha=1/2).$$ 
	Apart from this, he~\cite{mac} also proved $q_{\alpha}(z)$ to be univalent and
	$$ \min_{|z| <1} \RE q_{\alpha}(z) = q_\alpha(-1),$$ can refer~\cite{miler2} also. We study this function in detail and obtain certain sharp coefficient bounds for functions in $\mathcal{S}^*(q_\alpha)$.
	In the sequel we discuss many inclusion and radius results pertaining to the classes $\mathcal{S}^*_{q}(\alpha)$ and $\mathcal{S}^*(q_\alpha)$. 
	
	\section{Results related to $\mathcal{S}^*_q(\alpha)$}\label{sec2}
	We begin this section by showing the existence of some analytic function belonging to $ \mathcal{S}_q^*(\alpha)$ apart from $f(z)=z$.  
	\begin{example}
		Let $f_\gamma(z)=z+\gamma z^2$. If $|\gamma| < r_0$, where $r_0$ is the smallest such $r < 1$ satisfying the equation:
		\begin{align*}
			& \alpha^2 (2 r-1)^2 (r-1)^3 - 33 r^2 + 57 r^3 - 48 r^4 + 
			16 r^5 + 2 \alpha (r-1)^2 r (2 r-1) \\ & + 
			(2r-1)(r-1) (\alpha + r - 3 \alpha r + 2 \alpha r^2)(2-4r) + 
			9r =1\quad (0 \leq \alpha <1),
		\end{align*}
		then $f_\gamma \in \mathcal{S}^*_q(\alpha)$. Moreover, $r_0 \in(0,1/4]$.
	\end{example}
	\begin{proof}
		We know $f_\gamma \in \mathcal{S}$ whenever $|\gamma| \leq 1/2,$ thus $r_0 \in (0,1/2]$. A simple calculation yields
		\begin{equation*}
			\dfrac{z f_\gamma'(z)}{f_\gamma(z)} = \dfrac{1+ 2\gamma z}{1+\gamma z}\text{ and } 1+\dfrac{z f_\gamma''(z)}{f_\gamma'(z)}-\dfrac{z f_\gamma'(z)}{f_{\gamma}(z)}= \dfrac{\gamma z}{(1+\gamma z)(1+2 \gamma z)}.
		\end{equation*}
		Let $\gamma z= r e^{i \theta}$, $0 \leq  r \leq 1/2$ and $\theta \in [-\pi,\pi]$. Now, to prove the result, we show the following inequality holds:
		\begin{equation*}
			\RE\left(\dfrac{1+2 r e^{i \theta}}{1+r e^{i \theta}} \right) > \left|\dfrac{r e^{i \theta}}{(1+ r e^{i \theta})(1+2 r e^{i \theta})}-\alpha\right| \quad (0 \leq \alpha <1),
		\end{equation*}
		which is equivalent to show 
		\begin{align*}
			&- \sqrt{\dfrac{
					\alpha^2 + r^2 - 6 \alpha r^2 + 9 \alpha^2 r^2 + 4 \alpha^2 r^4 + 
					2 \alpha (-1 + 3 \alpha) r (1 + 2 r^2)x + 4 \alpha^2 r^2(2 x^2-1)}{
					1 + 9 r^2 + 4 r^4 + 6 (r + 2 r^3)x + 4 r^2(2 x^2-1)}} \\ & \quad + \dfrac{1 + 3 r x + 2 r^2}{1 + r^2 + 2 r x} =: h(\alpha,r,x)  > 0,
		\end{align*}
		where $x:= \cos \theta$. The function $h$ is increasing with respect to $x \in [-1,1]$, thus it suffices to show 
		\begin{align}\label{ex1}
			& \dfrac{2 r-1}{r-1}-\sqrt{\dfrac{\alpha^2 + r^2 - 6 \alpha r^2 + 13 \alpha^2 r^2 + 4 \alpha^2 r^4 -2 \alpha(-1 + 3 \alpha) r (1 + 2 r^2)}{1 + 13 r^2 + 4 r^4 - 6 (r + 2 r^3)}} \nonumber \\ \quad & =:h(\alpha,r)=h(\alpha,r,-1) > 0.
		\end{align}
		We observe that $h(\alpha,0) > 0$ and $h(\alpha,1/2) <0$. In fact, graphically we observe $h(\alpha,r) \leq 0$ for $r \geq 1/4$ for all $\alpha$. Since $h$ is a continuous function of $r$, thus there must exist $r_0 \in (0,1/4]$, which is a smallest positive root of $h(\alpha,r) =0$ by Intermediate value property. Consequently \eqref{ex1} holds for $ 0 < r < r_0$. Therefore, we conclude that $f_\gamma \in \mathcal{S}^*_q(\alpha)$ whenever $|\gamma| < r_0.$
	\end{proof}
	\begin{theorem}\label{firstthm}
		Let $f \in \mathcal{S}_q^*(\alpha)$ for $0\leq \alpha<1$. Then
		\begin{equation}\label{subor}
			\dfrac{z f'(z)}{f(z)} \prec q_\alpha(z):= \begin{cases}		\dfrac{(1-2 \alpha)z}{(1-z)(1-(1-z)^{1-2 \alpha})}, & \alpha \neq 1/2  \\ &  \\ \dfrac{-z}{(1-z)\log(1-z)}, & \alpha = 1/2. 
			\end{cases}
		\end{equation}
	\end{theorem}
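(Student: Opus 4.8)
The plan is to recast the defining inequality of $\mathcal{S}^*_q(\alpha)$ as a first–order differential subordination of Briot--Bouquet type. First I would set $p(z):=zf'(z)/f(z)$, which is analytic in $\mathbb{D}$ with $p(0)=1$. Taking the logarithmic derivative of $p$ and multiplying by $z$ gives the identity
\begin{equation*}
	\frac{zp'(z)}{p(z)} = 1+\frac{zf''(z)}{f'(z)}-\frac{zf'(z)}{f(z)},
\end{equation*}
so the expression sitting inside the modulus in \eqref{class} is exactly $zp'(z)/p(z)-\alpha$. Hence $f\in\mathcal{S}^*_q(\alpha)$ is equivalent to
\begin{equation*}
	\RE p(z) > \left|\frac{zp'(z)}{p(z)}-\alpha\right| \qquad (z\in\mathbb{D}).
\end{equation*}

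The one genuinely clever step is to distill a half-plane subordination out of this modulus inequality. Using $|\zeta|\ge-\RE\zeta$ with $\zeta=zp'(z)/p(z)-\alpha$, the inequality above forces
\begin{equation*}
	\RE\!\left(p(z)+\frac{zp'(z)}{p(z)}\right) > \alpha \qquad (z\in\mathbb{D}).
\end{equation*}
Writing $h(z):=(1+(1-2\alpha)z)/(1-z)$, which is a M\"obius map carrying $\mathbb{D}$ conformally onto the half-plane $\{w:\RE w>\alpha\}$ with $h(0)=1$, this says precisely that $p(z)+zp'(z)/p(z)\prec h(z)$, since the left-hand function is analytic, agrees with $h$ at the origin, and has image inside $h(\mathbb{D})$.

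Next I would identify $q_\alpha$ as the solution of the associated Briot--Bouquet equation. A direct computation---or, equivalently, the remark that $q_\alpha=zf_0'/f_0$ for the extremal convex function $f_0$ of order $\alpha$, for which $1+zf_0''/f_0'=h$---shows that $q_\alpha$ solves
$$q_\alpha(z)+\frac{zq_\alpha'(z)}{q_\alpha(z)}=h(z), \qquad q_\alpha(0)=1.$$
This is a Briot--Bouquet equation with $\beta=1,\ \gamma=0$. Since $h$ is convex univalent and $\RE(\beta h(z)+\gamma)=\RE h(z)>\alpha\ge0$, the Briot--Bouquet differential subordination theorem applies and delivers $p\prec q_\alpha$, that is, $zf'(z)/f(z)\prec q_\alpha(z)$, with $q_\alpha$ the best dominant. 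The two-case closed form for $q_\alpha$ then drops out of integrating $f_0'=(1-z)^{-2(1-\alpha)}$, the value $\alpha=1/2$ being the logarithmic degeneration of the exponent $2\alpha-1$.

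I expect the main obstacle to lie in justifying the Briot--Bouquet step, not in the reformulation. If one wants a self-contained argument through the Miller--Mocanu fundamental lemma (with $\theta(w)=w$ and $\phi(w)=1/w$), then one must verify the admissibility conditions that $Q(z)=zq_\alpha'(z)/q_\alpha(z)$ is starlike and that $\RE\big(zh'(z)/Q(z)\big)>0$ on $\mathbb{D}$; these require explicit control of the rather unwieldy $q_\alpha$ and its logarithmic derivative. Here I would lean on MacGregor's established univalence of $q_\alpha$ together with the boundary value $\min_{|z|<1}\RE q_\alpha(z)=q_\alpha(-1)$ to bound the relevant quantities.
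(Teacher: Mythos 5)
Your proposal is correct and follows essentially the same route as the paper: the same substitution $p(z)=zf'(z)/f(z)$, the same use of $|\zeta|\ge-\RE\zeta$ to extract $\RE\bigl(p(z)+zp'(z)/p(z)\bigr)>\alpha$, and the same appeal to the Briot--Bouquet machinery (the paper cites Miller--Mocanu, Theorem 3.3d) to identify the best dominant $q_\alpha$ as the solution of $q+zq'/q=(1+(1-2\alpha)z)/(1-z)$. The only difference is cosmetic: you name the result as the Briot--Bouquet theorem and discuss admissibility, while the paper simply cites the reference and integrates the resulting ODE.
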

	\begin{proof}
		Let us define \begin{equation*}
			p(z) = \dfrac{z f'(z)}{f(z)},
		\end{equation*}
		then equation \eqref{class} reduces to
		\begin{align*}
			\RE p(z) > & \left|\dfrac{z p'(z)}{p(z)}-\alpha\right|\\ \geq & \RE\left(\alpha-\dfrac{z p'(z)}{p(z)}\right),
		\end{align*}
		which yields
		\begin{equation*}
			\RE\left(p(z)+\dfrac{z p'(z)}{p(z)}\right) > \alpha.
		\end{equation*}
		Using the subordination concept, we write the above inequality as follows:
		\begin{equation}\label{subord}
			p(z)+\dfrac{z p'(z)}{p(z)} \prec \dfrac{1+(1-2 \alpha)z}{1-z} \quad (0 \leq \alpha<1).
		\end{equation}
		From~\cite[Theorem 3.3 d, p.~109]{miler2}, we have:
		\begin{equation}\label{dom}
			q(z) + \dfrac{z q'(z)}{q(z)} = \dfrac{1+(1-2 \alpha)z}{1-z},
		\end{equation}
		where $q$ is the best dominant of the subordination \eqref{subord}. On solving equation \eqref{dom}, we obtain
		\begin{equation*}
			q(z) =  \left(\dfrac{z}{(1-z)^{2(1-\alpha)}}\right)\left(\int_{0}^{z}(1-t)^{-2(1-\alpha)} dt\right)^{-1} \quad (\alpha \neq 1/2)
		\end{equation*}
		and for $\alpha=1/2$
		\begin{equation*}
			q(z)= \left(\dfrac{z}{1-z}\right)\left(\int_{0}^{z}\dfrac{dt}{1-t}\right)^{-1}.
		\end{equation*}
		Hence the proof follows.
	\end{proof}
	We consider a function $f \in \mathcal{A}$, then 
	\begin{equation} \label{z/f}
		\dfrac{z}{f'(z)}=:z+\sum_{n=2}^{\infty}c_n z^n\text{ and } \dfrac{z}{f(z)}=:1+\sum_{n=1}^{\infty}b_nz^n,
	\end{equation}where $b_n \in \mathbb{R}$ and $c_n \geq 0$.
	\begin{theorem}
		Let $f \in \mathcal{S}$ of the form~\eqref{z/f} belongs to $ \mathcal{S}^*_{q}(\alpha)$. Then 
		\begin{equation*}
			\sum_{n=2}^{\infty}(n+\alpha-2) c_n < (1-\alpha).
	\end{equation*}\end{theorem}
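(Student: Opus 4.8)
The plan is to extract from the defining inequality a one–sided estimate and then read off the coefficient bound from the Taylor expansion of $z/f'(z)$. Writing $w := 1 + zf''(z)/f'(z) - zf'(z)/f(z) - \alpha$, membership in $\mathcal{S}_q^*(\alpha)$ means $\RE(zf'/f) > |w|$. Since $|w| \ge \RE(-w)$, I would first deduce
\begin{equation*}
\RE\!\left(\frac{zf'(z)}{f(z)}\right) > \RE(-w) = \alpha - \RE\!\left(1 + \frac{zf''(z)}{f'(z)}\right) + \RE\!\left(\frac{zf'(z)}{f(z)}\right).
\end{equation*}
The two $\RE(zf'/f)$ terms cancel, leaving $\RE\big(1 + zf''(z)/f'(z)\big) > \alpha$; that is, every $f \in \mathcal{S}_q^*(\alpha)$ is convex of order $\alpha$. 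This is the only feature of the class I intend to exploit here.

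Next I would rewrite this convexity estimate through $g(z) := z/f'(z) = z + \sum_{n\ge 2} c_n z^n$. Since $f'(z) = z/g(z)$, logarithmic differentiation gives the identity $1 + zf''(z)/f'(z) = 2 - zg'(z)/g(z)$, so convexity of order $\alpha$ is equivalent to $\RE\big(zg'(z)/g(z)\big) < 2 - \alpha$ on $\mathbb{D}$. This is where the hypotheses $c_n \ge 0$ enter: restricting to $z = r \in (0,1)$ makes $g(r) = r + \sum_{n\ge 2} c_n r^n > 0$ and $g'(r) > 0$ real and positive, so the estimate reduces to the genuine real inequality $rg'(r) < (2-\alpha)g(r)$. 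Substituting the series and collecting coefficients,
\begin{equation*}
rg'(r) - (2-\alpha)g(r) = (\alpha - 1)r + \sum_{n=2}^{\infty}(n + \alpha - 2)c_n r^n < 0,
\end{equation*}
which rearranges to $\sum_{n=2}^{\infty}(n + \alpha - 2)c_n r^n < (1-\alpha)r$ for every $r \in (0,1)$.

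Finally I would let $r \to 1^-$. For $n \ge 2$ and $0 \le \alpha < 1$ the weights satisfy $n + \alpha - 2 \ge \alpha \ge 0$, so all terms are nonnegative and Abel's theorem (equivalently, monotone convergence) gives $\sum_{n=2}^{\infty}(n + \alpha - 2)c_n \le 1 - \alpha$. The delicate point, and the main obstacle, is upgrading this to the strict inequality of the statement: passing to the limit in the family of strict inequalities $\sum (n+\alpha-2)c_n r^n < (1-\alpha)r$ returns only ``$\le$'', and strictness genuinely fails for convexity alone. Indeed, the convex-of-order-$\alpha$ function determined by $f'(z) = e^{-(1-\alpha)z}$ has $c_n = (1-\alpha)^{n-1}/(n-1)!$ and realises $\sum_{n\ge 2}(n+\alpha-2)c_n = 1-\alpha$ exactly. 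Hence the strict bound must come from the full membership condition \eqref{class} rather than from its convexity consequence; the remaining work is to verify that \eqref{class} excludes precisely these extremal (equality) configurations, for instance by checking that the inequality in \eqref{class} fails for such $f$ at suitable non-real $z$ near the boundary. Once that exclusion is secured the strict inequality follows, while all the preceding steps are routine.
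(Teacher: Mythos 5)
Your argument is the paper's argument. The paper likewise discards the modulus via $|w|\ge\RE(-w)$ to land on the convexity consequence $\RE\bigl(1+zf''(z)/f'(z)\bigr)>\alpha$ (stated separately as Theorem~\ref{convex}), rewrites it through $z/f'(z)$ so that the $b_n$-terms cancel, restricts to real $z=r$, and lets $r\to1^-$ to obtain $\bigl(1+\sum nc_n\bigr)/\bigl(1+\sum c_n\bigr)<2-\alpha$, which rearranges to the stated bound. So up to the final limit the two proofs coincide step for step.

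The one issue you raise --- that passing to the limit in the family of strict inequalities $\sum_{n\ge2}(n+\alpha-2)c_nr^{n-1}<1-\alpha$ only yields ``$\le$'' --- is genuine, and you should know that the paper does \emph{not} resolve it: it simply writes the strict inequality after letting $z\to1^-$, with no further justification. Your example $f'(z)=e^{-(1-\alpha)z}$, for which $c_n=(1-\alpha)^{n-1}/(n-1)!\ge0$ and $\sum_{n\ge2}(n+\alpha-2)c_n=1-\alpha$ exactly, is correct and shows cleanly that no argument using only $f\in\mathcal{K}(\alpha)$ can produce the strict bound; any honest proof of strictness would have to use the full condition~\eqref{class}, as you say, and neither you nor the paper carries that out. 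In short: your proposal establishes exactly as much as the paper's own proof does (namely the bound with ``$\le$''), and you have correctly isolated the unjustified step rather than introduced a new one. The only thing to add is that you need not have been so tentative about the reduction itself --- everything before the limit is airtight and matches the source.
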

	\begin{proof}
		A simple computation yields 
		\begin{equation*}
			z\left(\dfrac{z}{f(z)}\right)'=\dfrac{z}{f(z)}-\left(\dfrac{z}{f(z)}\right)^2f'(z)\text{ and } z\left(\dfrac{z}{f'(z)}\right)'=\dfrac{z}{f'(z)}-\left(\dfrac{z}{f'(z)}\right)^2f''(z).
		\end{equation*}
		Also, $f \in \mathcal{S}^*_{q}(\alpha)$ of the form~\eqref{z/f} gives 
		\begin{equation*}
			\left|1+\dfrac{z f''(z)}{f'(z)} -\dfrac{z f'(z)}{f(z)}-\alpha\right| < \RE\left(\dfrac{z f'(z)}{f(z)}\right)
		\end{equation*}
		if and only if
		\begin{equation*}
			\left|1-\dfrac{z\left(\dfrac{z}{f'(z)}\right)'}{\dfrac{z}{f'(z)}}+\dfrac{z \left(\dfrac{z}{f(z)}\right)'}{\dfrac{z}{f(z)}} -\alpha\right| < \RE \dfrac{\dfrac{z}{f(z)}-z\left(\dfrac{z}{f(z)}\right)'}{\dfrac{z}{f(z)}}\end{equation*}
		if and only if
		\begin{equation*}
			\left|1-\dfrac{1+\sum_{n=2}^{\infty}nc_n z^{n-1}}{1+\sum_{n=2}^{\infty}c_n z^{n-1}}+\dfrac{\sum_{n=1}^{\infty}nb_n z^n}{1+\sum_{n=1}^{\infty}b_n z^n}-\alpha\right| < \RE\left(1-\dfrac{\sum_{n=1}^{\infty}nb_n z^n}{1+\sum_{n=1}^{\infty}b_n z^n}\right).
		\end{equation*}
		Now if $z \in \mathbb{D}$ is real and tends to $1^-$ through reals, then from the above inequality we deduce
		\begin{equation*}
			\dfrac{1+\sum_{n=2}^{\infty}n c_n}{1
				+\sum_{n=2}^{\infty}c_n} < 2-\alpha.
		\end{equation*}
		Therefore the results follows now.
	\end{proof}
	With the advent of Theorem~\ref{firstthm}, we define another interesting subclass of $\mathcal{S}^*$ as follows:
	\begin{definition}\label{sq}
		Let $\mathcal{S}^*(q_\alpha)$ denote the class of analytic functions $f \in \mathcal{S}$, satisfying
		\begin{equation}\label{qa}
			\dfrac{z f'(z)}{f(z)} \prec q_\alpha(z) \quad (z \in \mathbb{D},~0\leq \alpha<1).\end{equation}
	\end{definition}
	We infer from Theorem~\ref{firstthm}, $\mathcal{S}_q^*(\alpha) \subset \mathcal{S}^*(q_\alpha)$, so is non-empty. This class generalizes subclass of $\mathcal{S}^*$, such as for $\alpha=0$, it reduces to $\mathcal{S}^*(1/2)$.
	
	\section{About $\mathcal{S}^*(q_\alpha)$}\label{sec3}
	A function $f \in \mathcal{S}^*(q_\alpha)$ if and only if there exists an analytic function $s$, $s(z) \prec q_{\alpha}(z)$ such that
	\begin{equation}\label{q}
		f(z) = z \exp\int_{0}^{z} \dfrac{s(t)-1}{t} dt.
	\end{equation}
	Specifically, for $s(z) = q_\alpha(z)$, the structural formula~\eqref{q} yields
	\begin{align}\label{falpha}
		f_\alpha(z)  = \begin{cases} \dfrac{1-(1-z)^{2\alpha-1}}{2\alpha-1}, & \alpha \neq 1/2; \\ & \\ -\log(1-z), & \alpha=1/2.\end{cases}
	\end{align}
	Taylor series of $f_\alpha(z)$ $(0 \leq \alpha <1)$ is given as follows \begin{align*}f_\alpha(z) &= z+ (1-\alpha) z^2 + (3 - 5 \alpha + 2 \alpha^2) \dfrac{z^3}{3} + (6 - 13 \alpha + 9 \alpha^2 - 2 \alpha^3) \dfrac{z^4}{6}+\cdots  \\ &=  z+ \sum_{n=2}^{\infty}\left(\dfrac{\prod_{j=2}^{n}(j-2\alpha)}{n!}z^n\right) \quad (0 \leq \alpha <1),\end{align*} plays an extremal function for many cases in $\mathcal{S}^*(q_\alpha)$. Interestingly, this function $f_\alpha(z)$ is also the extremal function for the class $\mathcal{K}(\alpha)$ see~\cite{duren,miler2}. 
	\begin{figure}[ht]
		\begin{minipage}{0.45\textwidth}
			\includegraphics[width=0.65\linewidth, height=4.5cm]{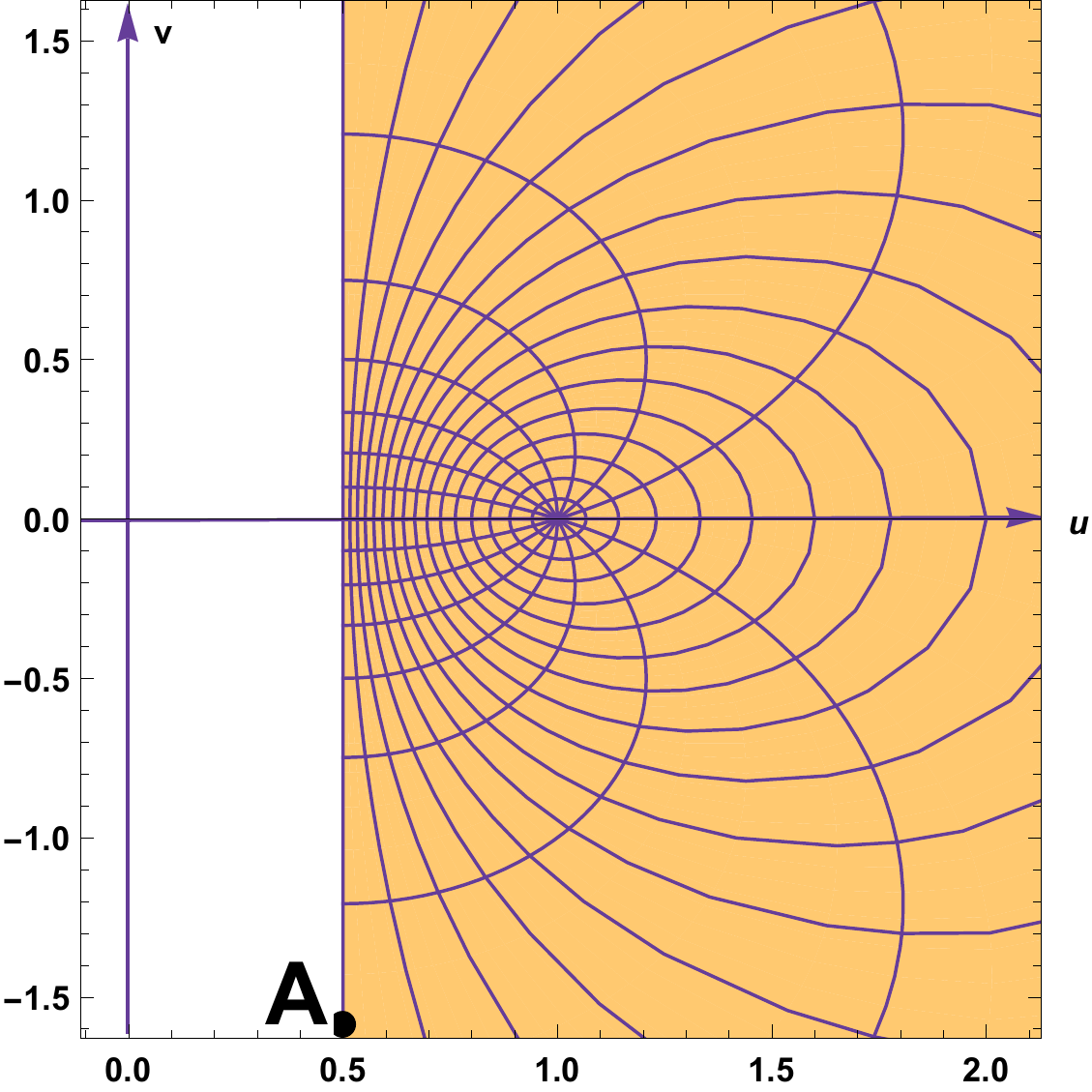} \\ $\alpha=0$,~$\textbf{A}=0.5$\end{minipage}
		\begin{minipage}{0.45\textwidth}
			\includegraphics[width=0.65\linewidth, height=4.5cm]{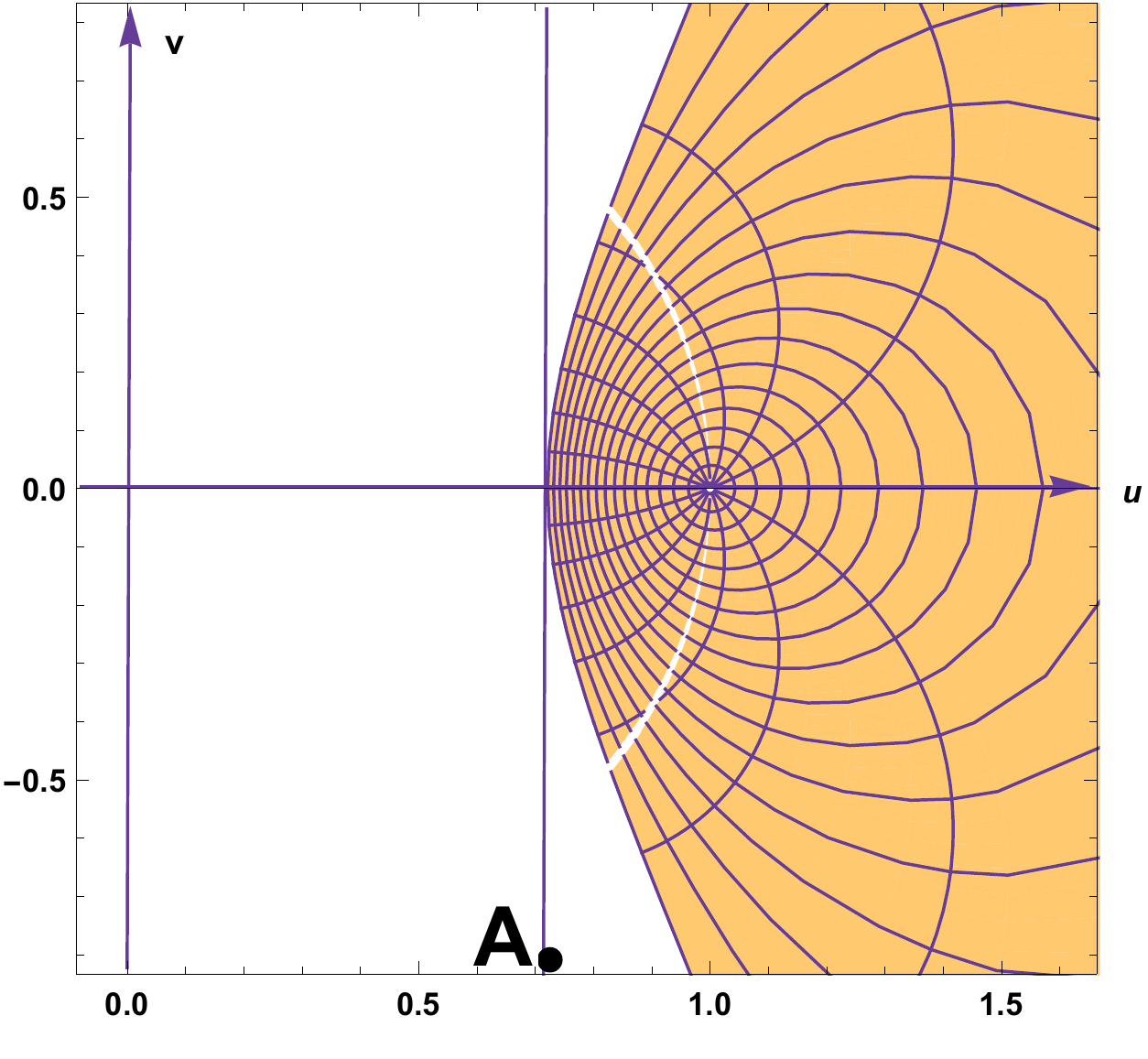}\\ $\alpha=1/2$,~$\textbf{A}= 0.721348$\end{minipage}
	\end{figure}
	\begin{figure}[ht]
		\begin{minipage}{0.45\textwidth}
			\includegraphics[width=0.65\linewidth, height=4.5cm]{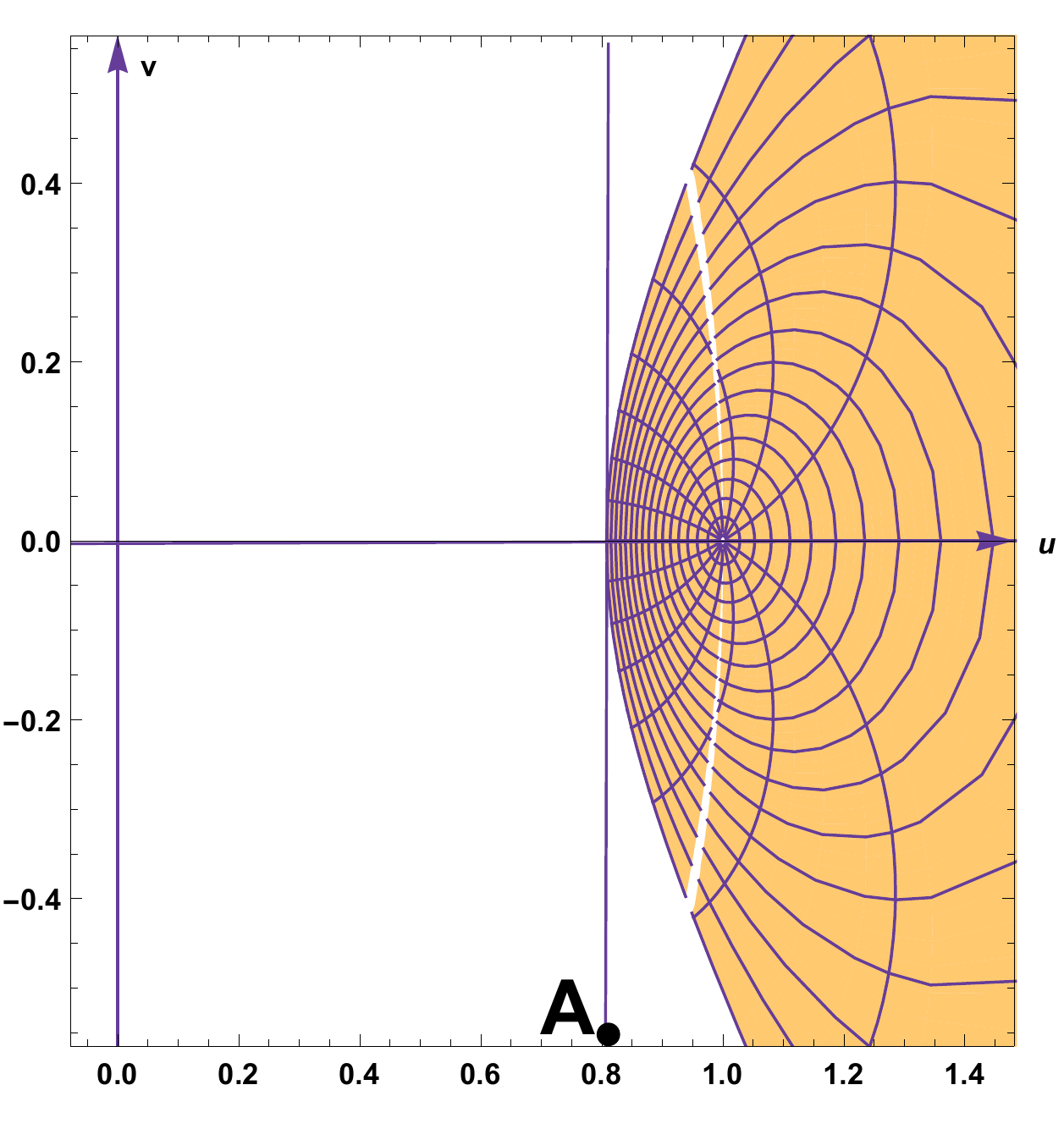} \\$\alpha=2/3,$~$\textbf{A}=0.807887$
		\end{minipage}
		\begin{minipage}{0.45\textwidth}
			\includegraphics[width=0.65\linewidth, height=4.5cm]{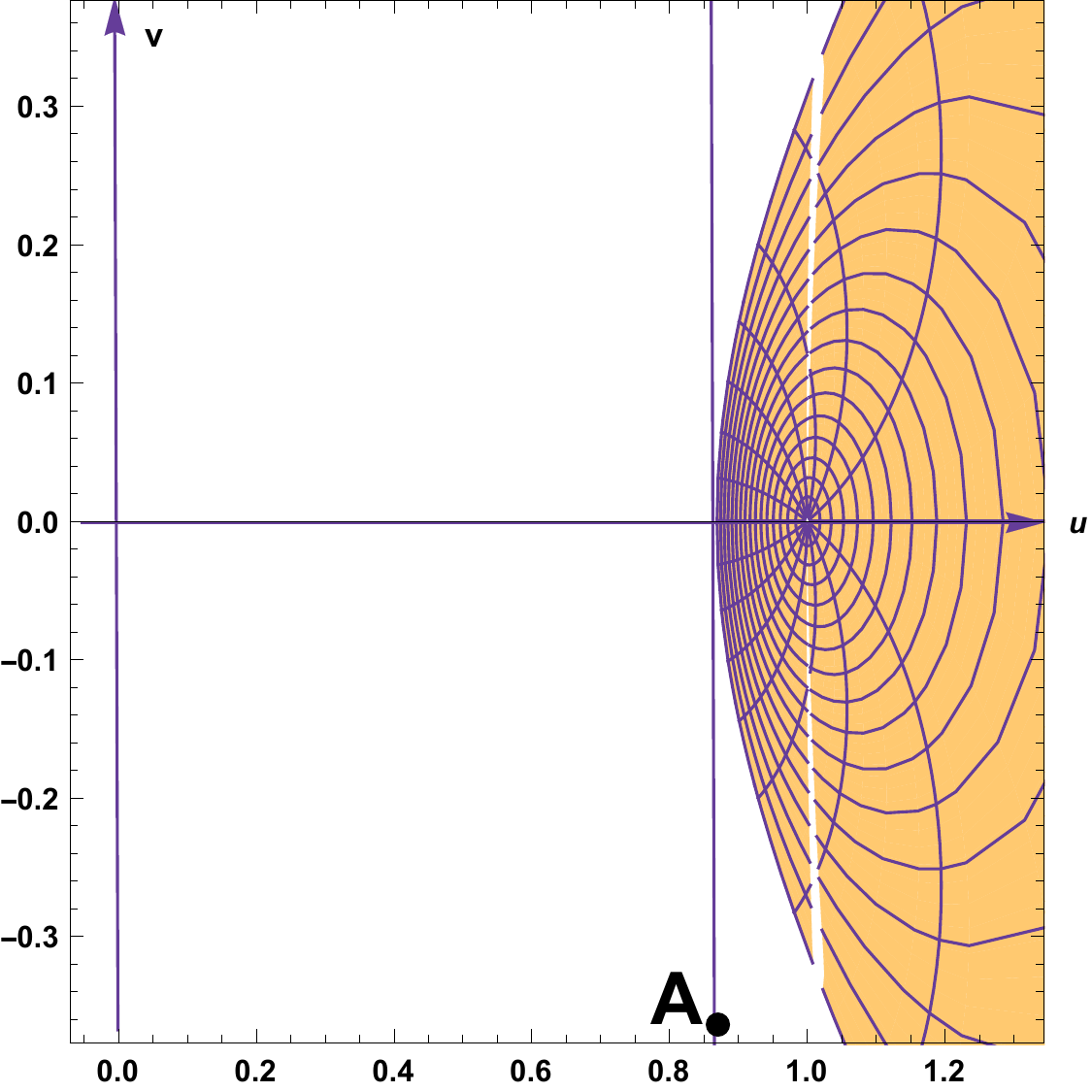}\\ $\alpha=7/9$,~$\textbf{A}=0.869128$
		\end{minipage}
		\caption{Image of unit disk $\mathbb{D}$ under the function $q_\alpha(z)$ for various $\alpha$.}
		\label{fig:incl_rel}
	\end{figure}
	\begin{remark}\label{cor1}
		Using the results in the proof of Theorem~\ref{firstthm}, we deduce 
		$$ p(z)+\dfrac{z p'(z)}{p(z)} \prec \dfrac{1+(1-2 \alpha)z}{1-z} \Rightarrow p(z) \prec q_\alpha(z),$$ which further yields 
		\begin{equation}\label{convexsub} 1+\dfrac{z f''(z)}{f'(z)} \prec \dfrac{1+(1-2\alpha)z}{1-z} \Rightarrow \dfrac{z f'(z)}{f(z)} \prec q_\alpha(z).\end{equation}
		The function $f_\alpha(z)$, given by~\eqref{falpha} is an extremal function for the above differential subordination implication. This result~\eqref{convexsub} is initially proved by MacGregor~\cite{mac} and later in~\cite[p.~113--115]{miler2}, using the following :
		\begin{equation}\label{minq} \min_{|z|<1}\left(\RE q_\alpha(z)\right) = q_{\alpha}(-1) = \begin{cases}
				\dfrac{2 \alpha-1}{2-2^{2(1-\alpha)}}, & \alpha \neq 1/2 \\ & \\ \dfrac{1}{\log 4}, & \alpha=1/2,\end{cases}\end{equation} and 
		$\min_{|z|=r} \RE q_\alpha(z) = q_\alpha(-r)$, $\max_{|z|=r} \RE q_{\alpha}(z) = q_\alpha(r).$  
	\end{remark}  
	We first show that an analytic univalent function $q_\alpha(z)$ is a Ma-Minda function. We have $q_{\alpha}(0)=1$ and $\RE q_{\alpha}(z) >0$ $(0 \leq \alpha <1)$. A further calculation reveals $q_{\alpha}'(0) >0$. Now the Taylor series of $q_{\alpha}(z)$ is given as follows:
	$$ q_\alpha(z) = 1 + (1 - \alpha) z +  (3 - 4\alpha + \alpha^2) \dfrac{z^2}{3} +  (2 - 3\alpha + \alpha^2) \dfrac{z^3}{2} + 
	(45 - 72\alpha + 26 \alpha^2 + 2 \alpha^3 - \alpha^4) \dfrac{z^4}{45}+\cdots,$$
	shows the function is symmetric with respect to the real axis as it has real coefficients. For detail analysis for the geometry of functions defined on $\mathbb{D}$, see~\cite{banga}. 
	The function $q_\alpha(z)$ is starlike with respect to $q_\alpha(0)=1$ as we get
	$$\RE\left(\dfrac{e^{i \theta} q_{\alpha}'(e^{i \theta})}{q_{\alpha}(e^{i \theta})-1}\right) >0,\qquad(-\pi \leq  \theta \leq \pi) \quad (0 \leq \alpha <1),$$ by performing a highly complex computation in Mathematica~11.0, which otherwise is not possible manually. Therefore $q_\alpha(z)$ is a Ma-Minda function as it satisfies all the conditions to be one. Figure~1 depicts $q_{\alpha}(\mathbb{D})$ for various $\alpha \in [0,1)$.\\ \ \\
	\textbf{Observation:} It is clear that $q_{0}(z)$ is a convex function but it is not the case for every $q_{\alpha}$. In fact, using Mathematica~11.0, graph of $\RE \left(1+\tfrac{z q_{\alpha}''(z)}{q_{\alpha}'(z)}\right)\bigg|_{z=e^{i \theta}}$ $(-\pi \leq \theta \leq \pi)$, is not positive for some $\alpha \in (0,1)$. Thus we coin below a problem which is open at present.\\ \ \\
	\textbf{Open Problem:} Find the range of $\alpha$ in $(0,1)$ for which $q_\alpha(\mathbb{D})$ is convex.\\ \ \\  
	Based on certain subordination results proved in~\cite{ma}, we have $f(z)/z \prec f_\alpha(z)/z$ and the following:
	\begin{theorem}
		Let $f \in \mathcal{S}^*(q_\alpha)$ and $|z_0|=r<1.$ Then
		\begin{enumerate}
			\item[\emph{(i)}] (Growth Theorem): $-f_\alpha(-r) \leq |f(z_0)| \leq f_\alpha(r)$
			$$ \dfrac{(1+r)^{2\alpha-1}-1}{2\alpha-1} \leq |f(z_0)| \leq \dfrac{1-(1-r)^{2\alpha-1}}{2\alpha-1}~(\alpha\neq 1/2)\text{; } \log(1+r) \leq |f(z_0)| \leq -\log(1-r)~(\alpha=1/2).$$
			\item[\emph{(ii)}] (Distortion Theorem): $f_\alpha'(-r) \leq |f'(z_0)| \leq f_\alpha'(r)$
			$$ (1+r)^{2(\alpha-1)} \leq |f(z_0)| \leq  (1-r)^{2(\alpha-1)}.$$
			\item[\emph{(iii)}] (Rotation Theorem): $|\arg(f(z_0)/z_0)| \leq \max_{|z|=r} \arg(f_\alpha(z)/z)$.
		\end{enumerate}	
		Equality holds at some $z_0 \neq 0$ if and only if $f$ is a rotation of $f_\alpha.$
	\end{theorem}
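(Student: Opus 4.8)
The plan is to treat this as a Ma--Minda type result. Since the discussion preceding the theorem establishes that $q_\alpha$ is a Ma--Minda function, the class $\mathcal{S}^*(q_\alpha)$ fits the framework of~\cite{ma}, and all three parts follow from subordination together with the extremal real-part information collected in Remark~\ref{cor1}. The backbone is the integral representation~\eqref{q}: writing $p(z)=zf'(z)/f(z)\prec q_\alpha(z)$, so that $p(z)=q_\alpha(\omega(z))$ for a Schwarz function $\omega$ with $|\omega(z)|\le|z|$, one has $\log(f(z)/z)=\int_0^z (p(t)-1)/t\,dt$ and $f'(z)=(f(z)/z)\,p(z)$. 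Each estimate is then obtained by integrating along the radius from $0$ to $z_0$.

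For the growth theorem I would set $t=\rho e^{i\theta}$ with $z_0=re^{i\theta}$ and take real parts to get
\[
\log\left|\frac{f(z_0)}{z_0}\right|=\int_0^r \frac{\RE p(\rho e^{i\theta})-1}{\rho}\,d\rho .
\]
Since $p(\rho e^{i\theta})\in q_\alpha(\{|w|\le\rho\})$ and $\RE q_\alpha$ is harmonic, Remark~\ref{cor1} forces $q_\alpha(-\rho)\le \RE p(\rho e^{i\theta})\le q_\alpha(\rho)$ for every $\rho$. Integrating and using that the extremal $f_\alpha$ of~\eqref{falpha} satisfies $\rho f_\alpha'(\rho)/f_\alpha(\rho)=q_\alpha(\rho)$ (whence $\int_0^r(q_\alpha(\rho)-1)/\rho\,d\rho=\log(f_\alpha(r)/r)$, and, via $F(z)=-f_\alpha(-z)$ which obeys $zF'/F=q_\alpha(-z)$, $\int_0^r(q_\alpha(-\rho)-1)/\rho\,d\rho=\log(-f_\alpha(-r)/r)$) yields $-f_\alpha(-r)\le|f(z_0)|\le f_\alpha(r)$; substituting the closed forms from~\eqref{falpha} produces the displayed inequalities. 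The rotation theorem is the imaginary-part analogue: $\arg(f(z_0)/z_0)=\IM\log(f(z_0)/z_0)$ lies in the range of the harmonic function $\arg(f_\alpha(w)/w)$ over $\{|w|\le r\}$ because $f(z)/z\prec f_\alpha(z)/z$, and the maximum principle pushes the extremum onto $|w|=r$, giving the stated bound.

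For the distortion theorem I would use $\log|f'(z_0)|=\log|f(z_0)/z_0|+\log|p(z_0)|$, the first summand being controlled exactly as above. For the second I need the modulus extrema of $q_\alpha$ on $\{|w|\le r\}$. The lower one is immediate: by harmonicity and Remark~\ref{cor1}, $\RE q_\alpha(w)\ge q_\alpha(-r)$ on the disk, so $|q_\alpha(w)|\ge \RE q_\alpha(w)\ge q_\alpha(-r)$ and hence $|p(z_0)|\ge q_\alpha(-r)$. For the upper one I would invoke $\max_{|w|\le r}|q_\alpha(w)|=q_\alpha(r)$. Combining these with the identity $f_\alpha'(\pm r)=q_\alpha(\pm r)\,f_\alpha(\pm r)/(\pm r)$ gives $f_\alpha'(-r)\le|f'(z_0)|\le f_\alpha'(r)$, and then the explicit $(1\pm r)^{2(\alpha-1)}$ bounds.

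The main obstacle is this last upper modulus estimate $\max_{|w|\le r}|q_\alpha(w)|=q_\alpha(r)$, which, unlike the real-part facts, is not recorded in Remark~\ref{cor1}; the natural route is to prove that all Taylor coefficients of $q_\alpha$ are nonnegative for $0\le\alpha<1$, whence $|q_\alpha(w)|\le q_\alpha(|w|)\le q_\alpha(r)$, a coefficient positivity that may have to be checked term by term (or numerically, as elsewhere in the paper). The remaining delicate point is the equality case: equality at a single $z_0\ne 0$ forces $\RE p(\rho e^{i\theta})=q_\alpha(\pm\rho)$ along the entire radius, which by the univalence of $q_\alpha$ forces $\omega(z)=\eta z$ with $|\eta|=1$; then $p(z)=q_\alpha(\eta z)$, and $f$ is a rotation of $f_\alpha$.
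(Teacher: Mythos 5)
Your proposal is essentially the paper's own route: the paper gives no written proof at all, simply invoking the general subordination theorems of Ma and Minda~\cite{ma} for the class $\mathcal{S}^*(q_\alpha)$ once $q_\alpha$ has been certified as a Ma--Minda function, and your argument is a faithful unpacking of exactly those theorems (integrate $\RE\,(zf'/f)$ and $\IM\log(f/z)$ along a radius, bound via Remark~\ref{cor1}, and factor $f'=(f/z)\cdot(zf'/f)$ for distortion). The one step you flag as an obstacle --- $\max_{|w|\le r}|q_\alpha(w)|=q_\alpha(r)$, which is an explicit \emph{hypothesis} of Ma--Minda's distortion theorem rather than a consequence of the Ma--Minda conditions --- is likewise left unverified by the paper, so your honest identification of it (together with the clean lower bound $|q_\alpha|\ge\RE q_\alpha\ge q_\alpha(-r)$ and the proposed fix via nonnegativity of the Taylor coefficients of $q_\alpha$) is, if anything, more careful than the source.
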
 
	\begin{remark}
		Interestingly, the above Growth, Distortion and Rotation results hold in the case of $\mathcal{K}(\alpha)$~\cite[Theorem~1, p.~139]{bieberbach1983} as well. It is due to the fact that the classes $\mathcal{S}^*(q_{\alpha})$ and $\mathcal{K}(\alpha)$ have the same extremal function $f_{\alpha}(z)$.  
	\end{remark}
	\subsection{Coefficient Estimates}
	
	From~\cite{banga}, we get $|a_2| \leq 1-\alpha$ for functions in $\mathcal{S}^*(q_\alpha)$ and the extremal function is $f_{\alpha}(z)$, defined in~\eqref{falpha}. Using Fekete-Szeg\"{o} bounds for Ma-Minda class~\cite{murg} and \cite[Remark 4.1, p.~12]{banga}, we obtain the following:
	\begin{theorem}\label{fekcor}
		Let $f \in \mathcal{S}^*(q_\alpha)$. Then we have
		\begin{equation*}|a_3-t a_2^2| \leq \begin{cases}
				\dfrac{(3-2\alpha)(1-\alpha)}{3}-t(1-\alpha)^2, & t \leq \dfrac{3-4\alpha}{6(1-\alpha)} \\ \dfrac{(1-\alpha)}{2}, & \dfrac{3-4\alpha}{6(1-\alpha)} \leq t \leq \dfrac{9-4\alpha}{6(1-\alpha)} \\ t(1-\alpha)^2-\dfrac{(3-2\alpha)(1-\alpha)}{3}, & t \geq \dfrac{9-4\alpha}{6(1-\alpha)}.
		\end{cases}\end{equation*} The result is sharp.
	\end{theorem}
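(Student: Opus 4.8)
The plan is to apply the Fekete--Szeg\"{o} machinery for Ma--Minda classes directly, since Theorem~\ref{firstthm} and the subsequent verification have already certified that $q_\alpha$ is a Ma--Minda function with the explicit Taylor expansion recorded above. Writing $p(z)=zf'(z)/f(z)=1+p_1z+p_2z^2+\cdots$, the relation $zf'(z)=p(z)f(z)$ gives $a_2=p_1$ and $a_3=(p_1^2+p_2)/2$, so that
\begin{equation*}
a_3-t a_2^2=\tfrac12 p_2+\tfrac12(1-2t)p_1^2.
\end{equation*}
Because $p\prec q_\alpha$, I would write $p(z)=q_\alpha(w(z))$ for a Schwarz function $w(z)=w_1z+w_2z^2+\cdots$, and read off $B_1=1-\alpha$ and $B_2=(3-4\alpha+\alpha^2)/3=(1-\alpha)(3-\alpha)/3$ from the expansion of $q_\alpha$. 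Substituting $p_1=B_1w_1$ and $p_2=B_1w_2+B_2w_1^2$ then yields
\begin{equation*}
a_3-t a_2^2=\tfrac12 B_1\bigl(w_2-v\,w_1^2\bigr),\qquad v:=(2t-1)B_1-\frac{B_2}{B_1}.
\end{equation*}

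The key inequality is the Keogh--Merkes bound $|w_2-v\,w_1^2|\le\max\{1,|v|\}$, valid for every Schwarz function $w$, which reduces the problem to an elementary estimate; this is precisely the content of the Fekete--Szeg\"{o} theorem for $\mathcal{S}^*(\phi)$ invoked from~\cite{murg}. Hence $|a_3-t a_2^2|\le\tfrac12 B_1\max\{1,|v|\}$, and it remains to split into the three regimes $v\le-1$, $-1\le v\le1$, and $v\ge1$. Solving $v=\mp1$ for $t$ produces the two thresholds, and here the only genuinely computational step is checking that they collapse to $\tfrac{3-4\alpha}{6(1-\alpha)}$ and $\tfrac{9-4\alpha}{6(1-\alpha)}$; the factorisation $3-4\alpha+\alpha^2=(1-\alpha)(3-\alpha)$ makes both the thresholds and the two linear branches simplify to the stated closed forms, the constant term $\tfrac{(3-2\alpha)(1-\alpha)}{3}$ arising as $\tfrac12(B_1^2+B_2)$.

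Finally, for sharpness I would exhibit explicit extremal functions. In the middle regime the extremal Schwarz function is $w(z)=z^2$, giving $p(z)=q_\alpha(z^2)$, whence $a_2=0$ and $a_3=\tfrac12 B_1=\tfrac{1-\alpha}{2}$; in the two outer regimes the extremal is $w(z)=z$, i.e.\ $p=q_\alpha$ and $f=f_\alpha$ as in~\eqref{falpha}, for which a short comparison against the Taylor coefficients of $f_\alpha$ confirms equality. Since the entire argument is a substitution into a known lemma followed by case analysis, I do not anticipate any conceptual obstacle; the one place demanding care is the algebraic reconciliation of the thresholds and branch values with the symmetric forms displayed in the statement.
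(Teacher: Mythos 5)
Your proposal is correct and follows essentially the same route as the paper, which simply invokes the known Fekete--Szeg\"{o} bound for Ma--Minda classes with $B_1=1-\alpha$ and $B_2=(3-4\alpha+\alpha^2)/3$ read off from the Taylor expansion of $q_\alpha$; you have merely unpacked the proof of that cited lemma, and your thresholds, branch values, and extremal functions all check out.
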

	Taking $t=0$ and $1$, respectively in the above result, we get the following 
	\begin{corollary}
		Let $f \in \mathcal{S}^*(q_\alpha)$. Then we have\begin{itemize}
			\item[(i)] $|a_3| \leq \begin{cases}
				\dfrac{(2\alpha-3)(\alpha-1)}{3},  & \alpha \leq 3/4 \\ 
				\dfrac{1-\alpha}{2}, & \alpha \geq 3/4.
			\end{cases}$\\ The result is sharp and the extremal function is given by $f_\alpha(z)$, given in~\eqref{falpha} for $\alpha \leq 3/4$ and  $\tilde{f}_{\alpha}(z)= \sqrt{\tfrac{(1-z^2)^{2a-1}-1}{2a-1}}$ for $\alpha \geq 3/4$.\\
			\item[(ii)] $|a_3-a_2^2| \leq \tfrac{1-\alpha}{2}$. The result is sharp for $\tilde{f}_{\alpha}(z)$, defined in the above part \emph{(i)} for  $\alpha \neq 1/2$ and $f_{1/2}(z):=\sqrt{\log(1-z^2)}$ for $\alpha=1/2$.\end{itemize}
	\end{corollary}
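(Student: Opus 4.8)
The plan is to obtain both inequalities as direct specializations of the Fekete--Szeg\"o bound in Theorem~\ref{fekcor}, choosing $t=0$ for part (i) and $t=1$ for part (ii), and then to exhibit functions in $\mathcal{S}^*(q_\alpha)$ on which equality holds. The only non-automatic step in the inequalities themselves is deciding, for each value of $t$, which of the three branches of the piecewise bound is active; this reduces to comparing $t$ against the two breakpoints $\frac{3-4\alpha}{6(1-\alpha)}$ and $\frac{9-4\alpha}{6(1-\alpha)}$.

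For part (i) I would set $t=0$. Since $1-\alpha>0$, the first-branch condition $0\le\frac{3-4\alpha}{6(1-\alpha)}$ is equivalent to $3-4\alpha\ge 0$, i.e. $\alpha\le 3/4$, in which case Theorem~\ref{fekcor} gives $|a_3|\le\frac{(3-2\alpha)(1-\alpha)}{3}$, which I would rewrite as $\frac{(2\alpha-3)(\alpha-1)}{3}$. For $\alpha\ge 3/4$ one has $\frac{3-4\alpha}{6(1-\alpha)}\le 0<\frac{9-4\alpha}{6(1-\alpha)}$, so $t=0$ lands in the middle branch and yields $|a_3|\le\frac{1-\alpha}{2}$; the two branches agree at $\alpha=3/4$ (both equal $1/8$), confirming continuity. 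For part (ii) I would set $t=1$ and check that it always lies in the middle branch for $0\le\alpha<1$: the inequality $1\le\frac{3-4\alpha}{6(1-\alpha)}$ reduces to $\alpha\ge 3/2$ and $1\ge\frac{9-4\alpha}{6(1-\alpha)}$ reduces to $\alpha\le -3/2$, both impossible in the admissible range. Hence $|a_3-a_2^2|\le\frac{1-\alpha}{2}$ for every $\alpha\in[0,1)$.

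It then remains to verify sharpness. For $\alpha\le 3/4$ in (i) the extremal is $f_\alpha$ from~\eqref{falpha}: its Taylor series recorded after~\eqref{falpha} gives $a_3=\frac{3-5\alpha+2\alpha^2}{3}=\frac{(3-2\alpha)(1-\alpha)}{3}$, matching the bound. For the middle-branch bound $\frac{1-\alpha}{2}$ (the case $\alpha\ge 3/4$ in (i) and all of (ii)) I would use the even competitor $\tilde f_\alpha(z)=\sqrt{f_\alpha(z^2)}$. A short logarithmic differentiation gives $\frac{z\tilde f_\alpha'(z)}{\tilde f_\alpha(z)}=\frac{z^2 f_\alpha'(z^2)}{f_\alpha(z^2)}=q_\alpha(z^2)$, and since $\omega(z)=z^2$ is a Schwarz function and $q_\alpha$ is univalent, $q_\alpha(z^2)\prec q_\alpha(z)$; thus $\tilde f_\alpha\in\mathcal{S}^*(q_\alpha)$. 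Writing $f_\alpha(w)=w+(1-\alpha)w^2+\cdots$ shows $\tilde f_\alpha(z)=z+\frac{1-\alpha}{2}z^3+\cdots$, so $a_2=0$ and $a_3=\frac{1-\alpha}{2}$; hence both $|a_3|$ and $|a_3-a_2^2|$ equal $\frac{1-\alpha}{2}$ on $\tilde f_\alpha$. Using~\eqref{falpha} this $\tilde f_\alpha$ is exactly the extremal function displayed in part (i) (up to the chosen branch of the square root).

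For the degenerate value $\alpha=1/2$ the factor $2\alpha-1$ vanishes in $\tilde f_\alpha$, so I would pass to the limit $\alpha\to 1/2$, recovering the logarithmic extremal $f_{1/2}$ recorded in part (ii); the same computation via $f_{1/2}(z^2)=-\log(1-z^2)=z^2+\frac{z^4}{2}+\cdots$ gives $a_2=0$ and $a_3=1/4=\frac{1-\alpha}{2}$. The inequalities are immediate once the active branch is identified, so the main (and only mild) obstacle is the bookkeeping around the extremals: confirming the subordination $q_\alpha(z^2)\prec q_\alpha(z)$ that places $\tilde f_\alpha$ in $\mathcal{S}^*(q_\alpha)$, selecting the square-root branch so that $\tilde f_\alpha$ is a genuine normalized member of $\mathcal{A}$, and handling the removable singularity at $\alpha=1/2$.
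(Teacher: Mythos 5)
Your proposal is correct and follows exactly the paper's route: the corollary is obtained by substituting $t=0$ and $t=1$ into Theorem~\ref{fekcor} and identifying the active branch, which is all the paper does. Your sharpness verification via $\tilde f_\alpha(z)=\sqrt{f_\alpha(z^2)}$ is sound and in fact implicitly corrects a sign typo in the paper's displayed extremal (the paper's $\sqrt{\tfrac{(1-z^2)^{2\alpha-1}-1}{2\alpha-1}}$ and $\sqrt{\log(1-z^2)}$ should have the numerators negated to be normalized members of $\mathcal{A}$).
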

	Recall that inverse of a function $f(z)=z+\sum_{n=2}^{\infty}a_n z^n \in \mathcal{S}$ is given by $f^{-1}(f(z))=z$, $z \in \mathbb{D}$ and $f(f^{-1}(w))=w$ $(|w|< r_0$, $r_0 >1/4)$, for which 
	\begin{equation}\label{finv} f^{-1}(w)= w - a_2 w^2 - (2a_2^2
		- a_3)w^3 - (5a_3^2
		- 5a_2a_3 + a_4)w^4 + \cdots.\end{equation}
	\begin{corollary}
		Let $f \in \mathcal{S}^*(q_\alpha)$ and the inverse be given by $f^{-1}(w)= w+\sum_{n=2}^{\infty} b_n w^n$. Then \begin{itemize}
			\item[(i)] $|b_2| \leq 1-\alpha,$\\
			\item[(ii)]	$|b_3| \leq \begin{cases}
				2(1-\alpha)^2-\dfrac{(3-2\alpha)(1-\alpha)}{3}, & 0 \leq \alpha \leq 3/8 \\
				\dfrac{1-\alpha}{2}, & 3/8 \leq \alpha \leq 1.
			\end{cases}$\end{itemize} The inequalities are sharp. \end{corollary}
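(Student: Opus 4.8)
The final statement to prove concerns the inverse coefficients $|b_2|$ and $|b_3|$ for functions $f \in \mathcal{S}^*(q_\alpha)$. The plan is to exploit the explicit relations \eqref{finv}, namely $b_2 = -a_2$ and $b_3 = 2a_2^2 - a_3$, so that the problem is immediately reduced to estimating quantities in the direct coefficients $a_2, a_3$. Since we already have $|a_2| \leq 1-\alpha$ from the cited result in \cite{banga}, part (i) follows at once because $|b_2| = |a_2| \leq 1-\alpha$, with sharpness inherited from the extremal function $f_\alpha(z)$ of \eqref{falpha}.

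For part (ii), the key observation is that $b_3 = 2a_2^2 - a_3 = -(a_3 - 2a_2^2)$, so $|b_3| = |a_3 - t a_2^2|$ with the specific choice $t = 2$. Thus I would simply substitute $t = 2$ into the Fekete-Szeg\"o bounds of Theorem~\ref{fekcor}. First I would check which of the three branches is active: the threshold conditions are $t \leq (3-4\alpha)/(6(1-\alpha))$ and $t \geq (9-4\alpha)/(6(1-\alpha))$. Setting $t=2$, the lower-branch condition $2 \leq (3-4\alpha)/(6(1-\alpha))$ rearranges (using $1-\alpha>0$) to $12(1-\alpha) \leq 3-4\alpha$, i.e. $9 \leq 8\alpha$, which is impossible for $\alpha<1$; hence the first branch never applies here. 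The upper-branch condition $2 \geq (9-4\alpha)/(6(1-\alpha))$ rearranges to $12(1-\alpha) \geq 9-4\alpha$, i.e. $3 \geq 8\alpha$, that is $\alpha \leq 3/8$. This is exactly the dividing point in the statement.

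Putting these together: for $0 \leq \alpha \leq 3/8$ we are in the third (upper) branch, giving
\begin{equation*}
|b_3| = |a_3 - 2a_2^2| \leq 2(1-\alpha)^2 - \frac{(3-2\alpha)(1-\alpha)}{3},
\end{equation*}
while for $3/8 \leq \alpha < 1$ we fall in the middle branch, where the bound is the constant $(1-\alpha)/2$, matching the claimed piecewise formula exactly. I would verify the endpoint $\alpha=1$ is handled trivially or by continuity since the class degenerates there. The sharpness of each branch transfers directly from the sharpness asserted in Theorem~\ref{fekcor}, the extremal functions being rotations of $f_\alpha$ or of the $\tilde{f}_\alpha$-type functions appearing in the associated Fekete-Szeg\"o extremals.

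The main obstacle, if any, is purely bookkeeping rather than conceptual: one must correctly read off the inverse-coefficient relation $b_3 = 2a_2^2 - a_3$ from \eqref{finv} (taking care of the sign, since \eqref{finv} is written for $f^{-1}$ with a leading minus inside the cubic term) and then confirm that $t=2$ lands in the intended branches by carrying out the inequality manipulations above. Once the threshold computation pinning the transition at $\alpha = 3/8$ is done, the result is an immediate corollary of the already-established Fekete-Szeg\"o theorem, and no independent subordination or coefficient-functional machinery is required.
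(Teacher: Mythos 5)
Your proposal is correct and follows essentially the same route as the paper: both reduce $|b_2|$ to $|a_2|\leq 1-\alpha$ and $|b_3|$ to the Fekete--Szeg\"o functional $|a_3-2a_2^2|$, then invoke Theorem~\ref{fekcor} with $t=2$. Your explicit verification that $t=2$ lands in the third branch exactly when $\alpha\leq 3/8$ (and in the middle branch otherwise) is just the bookkeeping the paper leaves implicit.
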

	\begin{proof}
		Now, comparing the Taylor series of $f^{-1}$ given in~\eqref{finv} and in the hypothesis, we get $|b_2| = |a_2|\leq (1-\alpha)$ and $f_{\alpha}(z)$ acts as an extremal for this bound. We have
		$|b_3|= |2 a_2^2-a_3|.$ Now the result follows from Theorem~\ref{fekcor} when $t=2$. 
	\end{proof}
	The logarithmic coefficients $\beta_n$ for functions $f \in \mathcal{A}$ is given by:
	\begin{equation*}
		\log \dfrac{f(z)}{z} = \sum_{n=1}^{\infty} 2 \beta_n z^n \quad (z \in \mathbb{D}).
	\end{equation*}
	In the literature, the sharp bound $|\beta_n| \leq 1/n$ $(n \geq 1)$ for functions in $\mathcal{S}^*$ is already proved and equality holds for the Koebe function. Here we derive the bounds of $|\beta_n|$ for functions in the class $\mathcal{S}^*(q_\alpha)$. 
	For this, let us define a set
	$$E(\alpha)=\{\alpha \in [0,1): q_\alpha(\mathbb{D})\text{ is } \text{ convex }\}.$$
	\begin{theorem}
		Let $f \in \mathcal{S}^*(q_\alpha)$ $(\alpha \in E(\alpha))$. Then, we have
		$$ |\beta_n| \leq \dfrac{1-\alpha}{2n}.$$ The inequality is sharp.
	\end{theorem}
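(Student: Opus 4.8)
The plan is to reduce the statement to a classical coefficient inequality for functions subordinate to a convex univalent majorant. First I would translate the logarithmic coefficients into Taylor coefficients of $zf'(z)/f(z)$. Differentiating the defining relation $\log(f(z)/z) = \sum_{n=1}^{\infty} 2\beta_n z^n$ and multiplying by $z$ gives
\[
\frac{zf'(z)}{f(z)} - 1 = \sum_{n=1}^{\infty} 2n\beta_n z^n .
\]
Writing $p(z) = zf'(z)/f(z)$, so that $p(0)=1$ and the membership $f \in \mathcal{S}^*(q_\alpha)$ is exactly the subordination $p \prec q_\alpha$, the numbers $2n\beta_n$ are precisely the Taylor coefficients of $p(z)-1$.

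Next I would normalize at the origin. Since $p(0)=q_\alpha(0)=1$, the subordination $p\prec q_\alpha$ yields $p-1 \prec q_\alpha - 1$. Because $\alpha \in E(\alpha)$, the domain $q_\alpha(\mathbb{D})$ is convex, and as convexity of the image is preserved under translation, $q_\alpha - 1$ is a convex univalent function whose first Taylor coefficient is $q_\alpha'(0) = 1-\alpha$, read off from the series
\[
q_\alpha(z) = 1 + (1-\alpha)z + (3-4\alpha+\alpha^2)\frac{z^2}{3} + \cdots
\]
recorded above. The key step is then Rogosinski's coefficient inequality for subordination to a convex function: if $\sum_{n\ge 1} d_n z^n$ is subordinate to a convex univalent $\sum_{n\ge 1} c_n z^n$, then $|d_n| \le |c_1|$ for every $n\ge 1$. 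Applying this with $d_n = 2n\beta_n$ and $c_1 = 1-\alpha$ gives $|2n\beta_n| \le 1-\alpha$, that is $|\beta_n| \le (1-\alpha)/(2n)$.

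For sharpness I would exhibit, for each fixed $n$, the Schwarz function $\upsilon(z)=z^n$ and set $p(z)=q_\alpha(z^n)$, which is subordinate to $q_\alpha$; the structural formula~\eqref{q} then produces a function $f \in \mathcal{S}^*(q_\alpha)$ with $zf'(z)/f(z)=q_\alpha(z^n)$. Since $q_\alpha(z^n)-1 = (1-\alpha)z^n + O(z^{2n})$, the coefficient of $z^n$ in $p-1$ equals $1-\alpha$, so $2n\beta_n = 1-\alpha$ and equality is attained; note that for $n=1$ this recovers $f_\alpha$ itself as extremal.

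I expect the only real obstacle to be the careful verification that Rogosinski's theorem applies, namely that convexity of $q_\alpha(\mathbb{D})$ (guaranteed by the hypothesis $\alpha\in E(\alpha)$) passes to the translate $q_\alpha-1$ and that the normalizing coefficient is indeed $1-\alpha$; everything else is routine bookkeeping. It is worth stating explicitly that the restriction $\alpha\in E(\alpha)$ is precisely what licenses the convex-majorant inequality, so this condition is essential to the present argument rather than incidental.
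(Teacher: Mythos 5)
Your proposal is correct and follows essentially the same route as the paper: differentiate $\log(f(z)/z)$ to obtain $\sum 2n\beta_n z^n \prec q_\alpha(z)-1$ and invoke Rogosinski's coefficient theorem for subordination to a convex majorant, with the hypothesis $\alpha\in E(\alpha)$ supplying the needed convexity. You are in fact more explicit than the paper on two points it leaves implicit — why the convexity hypothesis licenses Rogosinski's bound, and the extremal function $q_\alpha(z^n)$ witnessing sharpness — but the argument is the same.
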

	\begin{proof}
		For $f \in \mathcal{S}^*(q_\alpha)$, we have, 
		\begin{equation*}
			\dfrac{zf'(z)}{f(z)}\prec q_\alpha(z),\end{equation*}
		equivalent to
		$$ z\left(\log\dfrac{f(z)}{z}\right)'\prec q_{\alpha}(z)-1,$$
		which further yields
		$$\sum_{n=1}^{\infty} 2 n\beta_n z^n  \prec \sum_{n=1}^{\infty} B_n z^n := q_\alpha(z)-1.$$ Applying Rogosinski's result~\cite{rog}, we obtain
		$$ 2 n |\beta_n| \leq |B_1|=: 1-\alpha.$$Hence the result.
	\end{proof}
	
	Let $g(z)=z+\sum_{n=2}^{\infty} g_n z^n$ and $h(z)=z+\sum_{n=2}^{\infty}h_n z^n$ be two analytic functions in $\mathbb{D}$, then convolution of $g$ and $h$ is defined as follows $$(g*h)(z)= z+\sum_{n=2}^{\infty} g_n h_n z^n.$$ 
	The following result gives the necessary and sufficient condition in terms of convolution for the functions in $\mathcal{S}^*(q_\alpha)$.
	\begin{theorem}
		A function $f \in \mathcal{S}$ belongs to $\mathcal{S}^*(q_\alpha)$ if and only if it satisfies
		\begin{equation}\label{convres}
			\dfrac{1}{z}\left(f(z) * \dfrac{z-\lambda z^2}{(1-z)^2}\right) \neq 0 \quad (z \in \mathbb{D}),
		\end{equation}
		where $$\lambda:= \lambda(\theta)=\begin{cases} \dfrac{1-2 \alpha}{(1-2\alpha)-(e^{-i\theta}-1)(1-(1-e^{i \theta})^{1-2\alpha})}, & \alpha\neq 1/2\\ & \\ \left(1-(1-e^{- i\theta})\log(1-e^{i \theta})\right)^{-1}, & \alpha=1/2,\end{cases}~~~~~~(\theta \in [-\pi,\pi]).$$
	\end{theorem}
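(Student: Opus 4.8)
The plan is to use the standard convolution characterization for Ma-Minda classes, which applies here since we have already verified that $q_\alpha$ is a univalent Ma-Minda function. The first step is to reformulate the subordination \eqref{qa} geometrically. Because $q_\alpha$ is univalent and $q_\alpha(\mathbb{D})$ is a simply connected domain containing $q_\alpha(0)=1$, the membership $zf'(z)/f(z)\prec q_\alpha(z)$ is equivalent to requiring that the analytic function $p(z):=zf'(z)/f(z)$ never takes a boundary value of $q_\alpha(\mathbb{D})$, that is,
$$\frac{zf'(z)}{f(z)}\neq q_\alpha(e^{i\theta})\qquad(z\in\mathbb{D},\ \theta\in[-\pi,\pi]).$$
Since $f\in\mathcal{S}$ forces $f(z)\neq 0$ on $\mathbb{D}\setminus\{0\}$ with $f(z)/z\to 1$, this is in turn equivalent to the nonvanishing statement
$$\frac{1}{z}\bigl(zf'(z)-q_\alpha(e^{i\theta})f(z)\bigr)\neq 0\qquad(z\in\mathbb{D}),$$
whose value at $z=0$ is $1-q_\alpha(e^{i\theta})$, nonzero by the univalence of $q_\alpha$.

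Next I would collapse the numerator into a single convolution via the elementary identities $f(z)=f(z)*\frac{z}{1-z}$ and $zf'(z)=f(z)*\frac{z}{(1-z)^2}$, which give
$$zf'(z)-q_\alpha(e^{i\theta})f(z)=f(z)*\left(\frac{z}{(1-z)^2}-q_\alpha(e^{i\theta})\frac{z}{1-z}\right).$$
A short manipulation of the kernel yields
$$\frac{z}{(1-z)^2}-q_\alpha(e^{i\theta})\frac{z}{1-z}=\bigl(1-q_\alpha(e^{i\theta})\bigr)\frac{z-\lambda z^2}{(1-z)^2},\qquad\lambda=\frac{q_\alpha(e^{i\theta})}{q_\alpha(e^{i\theta})-1}.$$
Dividing out the nonzero scalar $1-q_\alpha(e^{i\theta})$ then shows that the nonvanishing condition above holds for all $\theta$ precisely when \eqref{convres} holds, which proves the characterization modulo identifying $\lambda$.

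It then remains to reduce $\lambda=q_\alpha(e^{i\theta})/(q_\alpha(e^{i\theta})-1)$ to the closed forms in the statement. For $\alpha\neq 1/2$ write $q_\alpha(e^{i\theta})=N/D$ with $N=(1-2\alpha)e^{i\theta}$ and $D=(1-e^{i\theta})\bigl(1-(1-e^{i\theta})^{1-2\alpha}\bigr)$, so that $\lambda=N/(N-D)$; multiplying numerator and denominator by $e^{-i\theta}$ and using $e^{-i\theta}(1-e^{i\theta})=e^{-i\theta}-1$ produces exactly the displayed rational expression. The case $\alpha=1/2$ is handled identically with $N=-e^{i\theta}$ and $D=(1-e^{i\theta})\log(1-e^{i\theta})$, after multiplying through by $-e^{-i\theta}$.

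The genuine obstacle is conceptual and lies in the very first equivalence: that $p\prec q_\alpha$ amounts to $p$ omitting every boundary value $q_\alpha(e^{i\theta})$. This rests on the univalence of $q_\alpha$ established earlier (following MacGregor) together with $p(0)=1\in q_\alpha(\mathbb{D})$, so that a connectedness argument prevents the connected set $p(\mathbb{D})$ from leaving $q_\alpha(\mathbb{D})$ without meeting its boundary curve. The one delicate feature to monitor is the boundary singularity of $q_\alpha$ at $z=1$ (where $q_\alpha(e^{i\theta})\to\infty$ as $\theta\to 0$), which renders $q_\alpha(\mathbb{D})$ unbounded; this causes no difficulty, and indeed a useful sanity check is that $\lambda\to 1$ as $\theta\to 0$, so the corresponding kernel degenerates to $z/(1-z)$ and \eqref{convres} reduces to $f(z)/z\neq 0$, which is automatic. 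Once this equivalence is secured, the convolution identities and the algebraic reduction of $\lambda$ are entirely routine.
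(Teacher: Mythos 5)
Your proposal is correct and follows essentially the same route as the paper: rewrite the subordination as $zf'(z)/f(z)\neq q_\alpha(e^{i\theta})$, convert $zf'(z)$ and $f(z)$ into convolutions with $z/(1-z)^2$ and $z/(1-z)$, and normalize the resulting kernel to $(z-\lambda z^2)/(1-z)^2$ with $\lambda=q_\alpha(e^{i\theta})/(q_\alpha(e^{i\theta})-1)$. You are in fact more careful than the paper on the two points it leaves implicit (the justification that subordination is equivalent to omitting the boundary values of the univalent function $q_\alpha$, and the nonvanishing of the scalar $1-q_\alpha(e^{i\theta})$ that is divided out), so no changes are needed.
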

	\begin{proof}
		Let $\alpha \neq 1/2$. Then from Definition~\ref{sq}, we have $f \in \mathcal{S}^*(q_\alpha)$ if and only if it satisfies the subordination~\eqref{qa}, 
		equivalently
		$$	\dfrac{z f'(z)}{f(z)} \neq  \dfrac{(1-2 \alpha)e^{i \theta}}{(1-e^{i \theta})(1-(1-e^{i \theta})^{1-2 \alpha})} \quad (z \in \mathbb{D}, \theta \in [-\pi,\pi]),$$
		further which yields
		$$f'(z) \neq  \left(\dfrac{(1-2 \alpha)e^{i \theta}}{(1-e^{i \theta})(1-(1-e^{i \theta})^{1-2 \alpha})}\right) \dfrac{f(z)}{z}.$$ Also, it can be expressed as
		\begin{equation}\label{conv}\dfrac{1}{z}\left(z f'(z)- \dfrac{(1-2 \alpha)e^{i \theta}}{(1-e^{i \theta})(1-(1-e^{i \theta})^{1-2 \alpha})} f(z)\right) \neq 0.\end{equation}
		As we recall 
		$$ zf'(z) = f(z) * \dfrac{z}{(1-z)^2}\text{ and } f(z) = f(z) * \dfrac{z}{1-z},$$
		\eqref{conv} becomes
		$$ \dfrac{1}{z} \left( f(z) * \dfrac{z-\tfrac{(1-2 \alpha) e^{i \theta}}{(1-e^{i \theta})(1-(1-e^{i \theta})^{1-2\alpha})}(z-z^2)}{(1-z)^2}\right) \neq 0,$$
		which upon simplification reduces to~\eqref{convres}. Similarly the result follows for $\alpha=1/2$.  
	\end{proof}
	Now, using the convolution properties, we have
	$$ \dfrac{1}{z}\left(f(z) * \dfrac{z-\lambda z^2}{(1-z)^2}\right)  = \dfrac{1}{z}\left((1-\lambda)z f'(z)+\lambda f(z)\right).$$ Substituting the above equation in~\eqref{convres} along with the Taylor series expansion of $f \in \mathcal{A}$, we deduce the following result: 
	\begin{corollary}
		A function $f \in \mathcal{S}$ belongs to $\mathcal{S}^*(q_\alpha)$ if and only if it satisfies\begin{equation*} 1 \neq \sum_{n=2}^{\infty} \rho(\theta)a_n z^{n-1}:=
			\begin{cases}	\sum_{n=2}^{\infty} \dfrac{(e^{-i\theta}-1)(1-(1-e^{i \theta})^{1-2\alpha})n-(1-2\alpha)}{(1-2\alpha)-(e^{-i\theta}-1)(1-(1-e^{i \theta})^{1-2\alpha})}a_n z^{n-1}, &\alpha\neq 1/2 \\ & \\ \sum_{n=2}^{\infty} \dfrac{(1-e^{-i\theta})\log(1-e^{i \theta})n-1}{1-(1-e^{-i\theta})\log(1-e^{i \theta})} a_n z^{n-1}, &\alpha=1/2.\end{cases} 
		\end{equation*}
	\end{corollary}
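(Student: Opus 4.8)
The plan is to derive this Corollary purely as a power-series restatement of the preceding convolution Theorem, since all of the geometric content has already been established there. Recall that the Theorem characterizes membership by the condition $\frac{1}{z}\left(f(z) * \frac{z-\lambda z^2}{(1-z)^2}\right) \neq 0$ for every $z \in \mathbb{D}$ and $\theta \in [-\pi,\pi]$, and the identity displayed immediately before the statement rewrites this kernel as $\frac{1}{z}\left((1-\lambda)zf'(z)+\lambda f(z)\right)$. So the entire task is to expand this last expression in powers of $z$ and transpose the resulting nonvanishing condition into the asserted form.

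First I would substitute $f(z) = z + \sum_{n=2}^{\infty} a_n z^n$, so that $zf'(z) = z + \sum_{n=2}^{\infty} n a_n z^n$. Forming the combination $(1-\lambda)zf'(z)+\lambda f(z)$, the linear term is $[(1-\lambda)+\lambda]z = z$, while for each $n \geq 2$ the coefficient of $z^n$ is $[(1-\lambda)n+\lambda]a_n$; dividing through by $z$ gives
$$\frac{1}{z}\left((1-\lambda)zf'(z)+\lambda f(z)\right) = 1 + \sum_{n=2}^{\infty}\big[(1-\lambda)n+\lambda\big]a_n z^{n-1}.$$
The characterization of the Theorem thus becomes $1 + \sum_{n=2}^{\infty}[(1-\lambda)n+\lambda]a_n z^{n-1} \neq 0$, which I would transpose to the equivalent form $1 \neq -\sum_{n=2}^{\infty}[(1-\lambda)n+\lambda]a_n z^{n-1}$. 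It then remains only to verify that the coefficient $-[(1-\lambda)n+\lambda]$ coincides with the stated $\rho(\theta)$.

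For that final identification I would insert the explicit value of $\lambda(\theta)$ from the Theorem. Writing, for $\alpha \neq 1/2$, $A := (e^{-i\theta}-1)(1-(1-e^{i\theta})^{1-2\alpha})$ and $D := (1-2\alpha) - A$, so that $\lambda = (1-2\alpha)/D$ and $1-\lambda = -A/D$, a single line gives $(1-\lambda)n+\lambda = \big((1-2\alpha)-An\big)/D$, whence $-[(1-\lambda)n+\lambda] = (An-(1-2\alpha))/D$, which is exactly the first branch of $\rho(\theta)$; the case $\alpha = 1/2$ follows identically using the second branch of $\lambda(\theta)$. The only step demanding any care is this reconciliation of the coefficient with $\rho(\theta)$, but it is routine bookkeeping rather than a genuine obstacle, so I would carry out the $\alpha \neq 1/2$ computation explicitly and note that $\alpha = 1/2$ is entirely analogous.
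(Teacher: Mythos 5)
Your proposal is correct and takes essentially the same route as the paper: the paper also obtains the corollary by substituting the identity $\tfrac{1}{z}\left(f(z)*\tfrac{z-\lambda z^2}{(1-z)^2}\right)=\tfrac{1}{z}\left((1-\lambda)zf'(z)+\lambda f(z)\right)$ into the convolution theorem and expanding in powers of $z$. The paper leaves the coefficient bookkeeping implicit, whereas you carry it out explicitly and correctly verify that $-\left[(1-\lambda)n+\lambda\right]$ equals the stated $\rho(\theta)$ in both the $\alpha\neq 1/2$ and $\alpha=1/2$ cases.
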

	Consequently, the next result follows. 
	\begin{corollary}\label{concor}
		Let $f \in \mathcal{S}$ satisfies
		\begin{equation*}
			\sum_{n=2}^{\infty} |\rho(\theta)||a_n| < 1,\end{equation*} then $f \in \mathcal{S}^*(q_\alpha)$.
	\end{corollary}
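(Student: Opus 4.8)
The plan is to deduce this immediately from the convolution characterization in the preceding corollary, using only the triangle inequality. Recall that the preceding corollary states that $f\in\mathcal{S}^*(q_\alpha)$ if and only if
$$ 1 \neq \sum_{n=2}^{\infty} \rho(\theta) a_n z^{n-1} \qquad (z\in\mathbb{D},\ \theta\in[-\pi,\pi]).$$
Thus it suffices to show that, under the stated coefficient hypothesis, the right-hand side can never take the value $1$ for any $z\in\mathbb{D}$ and any admissible $\theta$.

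First I would fix an arbitrary $z\in\mathbb{D}$ and $\theta\in[-\pi,\pi]$ and estimate the modulus of the series by the triangle inequality,
$$ \left|\sum_{n=2}^{\infty} \rho(\theta) a_n z^{n-1}\right| \le \sum_{n=2}^{\infty} |\rho(\theta)|\,|a_n|\,|z|^{n-1}.$$
Since $|z|<1$ gives $|z|^{n-1}\le 1$ for every $n\ge 2$, this is bounded above by $\sum_{n=2}^{\infty} |\rho(\theta)|\,|a_n|$, which by hypothesis is strictly less than $1$. Hence the modulus of the series is strictly below $1$, so in particular the series cannot equal $1$.

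Because $z$ and $\theta$ were arbitrary, the nonvanishing condition of the preceding corollary holds throughout $\mathbb{D}\times[-\pi,\pi]$, and therefore $f\in\mathcal{S}^*(q_\alpha)$. The argument is identical for the cases $\alpha\neq 1/2$ and $\alpha=1/2$, since only the explicit form of $\rho(\theta)$ changes while the estimate uses nothing beyond the factorization $|\rho(\theta)\,a_n\,z^{n-1}|=|\rho(\theta)|\,|a_n|\,|z|^{n-1}$. There is essentially no obstacle here; the only point requiring care is that the hypothesis be read as holding for every $\theta\in[-\pi,\pi]$, so that the nonvanishing is uniform in $\theta$, exactly as the characterization demands.
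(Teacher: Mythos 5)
Your argument is correct and is precisely the deduction the paper intends: the paper offers no written proof, stating only that the result follows ``consequently'' from the preceding convolution characterization, and the triangle-inequality estimate you give is exactly that implicit step.
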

	\section{Inclusion Relation}\label{sec4}
	We prove below some inclusion relations of $\mathcal{S}_q^*(\alpha)$.
	\begin{theorem}\label{convex}
		Let $0 \leq \alpha <1$, then $\mathcal{S}_q^*(\alpha) \subset \mathcal{K}(\alpha)$.
	\end{theorem}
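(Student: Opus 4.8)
The plan is to reduce everything to the single algebraic decomposition that separates the convexity quantity into the starlikeness quantity plus the expression already controlled by the class definition. Write $P(z):=zf'(z)/f(z)$ and $Q(z):=1+zf''(z)/f'(z)$, so that the defining condition \eqref{class} of $\mathcal{S}_q^*(\alpha)$ reads simply $\RE P(z) > |Q(z)-P(z)-\alpha|$ for all $z\in\mathbb{D}$, while the membership $f\in\mathcal{K}(\alpha)$ amounts to $\RE Q(z)>\alpha$, i.e. $\RE(Q(z)-\alpha)>0$.

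The key identity I would exploit is the trivial rearrangement
\begin{equation*}
	Q(z)-\alpha = \bigl(Q(z)-P(z)-\alpha\bigr) + P(z),
\end{equation*}
which lets me compute $\RE(Q-\alpha)=\RE(Q-P-\alpha)+\RE P$. Applying the elementary bound $\RE w \geq -|w|$ to the first summand gives
\begin{equation*}
	\RE\bigl(Q(z)-\alpha\bigr) \geq \RE P(z) - |Q(z)-P(z)-\alpha|.
\end{equation*}
By the very definition of $\mathcal{S}_q^*(\alpha)$ the right-hand side is strictly positive, so $\RE Q(z)>\alpha$ throughout $\mathbb{D}$. Since $\mathcal{S}_q^*(\alpha)\subset\mathcal{S}^*\subset\mathcal{S}$ has already been recorded in the excerpt, the univalence required for membership in $\mathcal{K}(\alpha)$ comes for free, and the inclusion follows.

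There is no serious obstacle in this argument: the whole proof is the one-line estimate above, the only nontrivial ingredient being the reverse-triangle-type inequality $\RE w\geq -|w|$. If anything deserves a remark, it is the implicit consistency check that the defining inequality forces $\RE P>0$ (hence the modulus comparison is meaningful and $f$ is indeed starlike), which is automatic since $\RE P>|Q-P-\alpha|\geq 0$. Thus I would present the result as an immediate consequence of \eqref{class} rather than invoke any of the subordination machinery from Theorem~\ref{firstthm}.
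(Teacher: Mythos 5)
Your proof is correct and follows essentially the same route as the paper: both arguments apply the elementary bound $\RE w \geq -|w|$ to $w = 1+zf''(z)/f'(z)-zf'(z)/f(z)-\alpha$ and combine it with the defining inequality to conclude $\RE(1+zf''(z)/f'(z))>\alpha$. The only difference is presentational, in that you make the additive decomposition explicit.
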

	\begin{proof}
		Let $f \in  \mathcal{S}_q^*(\alpha)$, then inequality~\eqref{class} yields
		\begin{equation*}
			\RE\left(\dfrac{z f'(z)}{f(z)} \right) > - \RE\left(1+\dfrac{z f''(z)}{f'(z)}-\dfrac{z f'(z)}{f(z)}-\alpha\right),
		\end{equation*}  
		which upon a straightforward calculation yields
		\begin{equation*}
			\RE\left(1+\dfrac{z f''(z)}{f'(z)}\right) > \alpha.
		\end{equation*}
		This completes the proof.
	\end{proof}
	Silverman~\cite{silver} introduced the class $$\mathcal{G}_b:=\left\{f \in \mathcal{A}: \left|\dfrac{1+z f''(z)/f'(z)}{z f'(z)/f(z)}-1\right| <b,~0<b \leq 1\right\}.$$  In the following result we show the relation between the classes $\mathcal{G}_b$ and $\mathcal{S}_q^*$.
	\begin{corollary}
		$\mathcal{S}^*_q \subset \mathcal{G}_1$. 
	\end{corollary}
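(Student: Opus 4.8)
The plan is to reduce both memberships to statements about two quantities and then chain one elementary inequality. Abbreviate $P(z)=zf'(z)/f(z)$ and $Q(z)=1+zf''(z)/f'(z)$. With this notation, since $\mathcal{S}^*_q=\mathcal{S}^*_q(0)$, membership $f\in\mathcal{S}^*_q$ is precisely the requirement $\RE P(z)>|Q(z)-P(z)|$ for all $z\in\mathbb{D}$, whereas $f\in\mathcal{G}_1$ means $|Q(z)/P(z)-1|<1$, i.e. $|Q(z)-P(z)|<|P(z)|$. So the entire task is to pass from the first inequality to the second.

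First I would observe that the defining inequality of $\mathcal{S}^*_q$ forces $\RE P(z)>0$ on $\mathbb{D}$, because its right-hand side $|Q(z)-P(z)|$ is nonnegative; this is of course just the already-noted inclusion $\mathcal{S}^*_q\subset\mathcal{S}^*$. In particular $P(z)\neq 0$, so dividing by $P(z)$ is legitimate. Next I would invoke the elementary bound $\RE w\le|w|$ valid for every $w\in\mathbb{C}$, applied to $w=P(z)$, giving $\RE P(z)\le|P(z)|$. Chaining the hypothesis with this bound yields $|Q(z)-P(z)|<\RE P(z)\le|P(z)|$, where the strict inequality comes from the $\mathcal{S}^*_q$-condition and the non-strict one is the elementary estimate.

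Dividing through by $|P(z)|>0$ then gives
$$\left|\dfrac{Q(z)}{P(z)}-1\right|=\dfrac{|Q(z)-P(z)|}{|P(z)|}<1\qquad(z\in\mathbb{D}),$$
which is exactly the condition with $b=1$ defining $\mathcal{G}_1$. Hence $f\in\mathcal{G}_1$, establishing $\mathcal{S}^*_q\subset\mathcal{G}_1$. I do not anticipate any genuine obstacle: the argument is a one-line consequence of $\RE w\le|w|$, and the only point requiring a word of care is the legitimacy of dividing by $P(z)$, which is supplied by the positivity of $\RE P(z)$ inherited from $\mathcal{S}^*\supset\mathcal{S}^*_q$. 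It is worth remarking that the same reasoning shows membership fails to improve to any $\mathcal{G}_b$ with $b<1$ in general, so $b=1$ is the natural threshold here.
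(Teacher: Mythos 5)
Your proof is correct and follows essentially the same route as the paper: both pass from $\RE(zf'/f)>|1+zf''/f'-zf'/f|$ to $|zf'/f|>|1+zf''/f'-zf'/f|$ via $\RE w\le|w|$ and then divide. Your additional remark that $\RE(zf'/f)>0$ justifies the division by $zf'(z)/f(z)$ is a small point of care the paper leaves implicit.
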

	\begin{proof}
		Since $\mathcal{S}^*_q:=\mathcal{S}_q^*(0)$, we have $\alpha=0$. Let $f \in \mathcal{S}^*_q$, then inequality~\eqref{class} becomes 
		\begin{equation*}
			\RE\left(\dfrac{z f'(z)}{f(z)}\right) > \left|1+\dfrac{z f''(z)}{f'(z)} -\dfrac{z f'(z)}{f(z)}\right|,
		\end{equation*}
		which implies
		\begin{align*}
			\left|\dfrac{z f'(z)}{f(z)}\right| > \left|1+\dfrac{z f''(z)}{f'(z)} -\dfrac{z f'(z)}{f(z)}\right|,\end{align*} therefore, we have \begin{align*} \left|\left(\dfrac{1+z f''(z)/f'(z)}{z f'(z)/f(z)}\right)-1\right| < 1.
		\end{align*}
		Hence the result.
	\end{proof}
	Mocanu~\cite{moc} studied the class comprising of the linear combination of analytic representations of convex and starlike functions and defined it as $\rho-$ convex class as follows:
	\begin{equation*}
		\RE(\rho(1+z f''(z)/f'(z))+(1-\rho)(z f'(z)/f(z))) \geq 0 \quad (\rho \in \mathbb{R}).
	\end{equation*}
	\begin{corollary}
		Let $f \in \mathcal{S}^*_q$, then $f$ is $-1$-convex function.
	\end{corollary}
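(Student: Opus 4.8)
The plan is to unwind the definition of a $\rho$-convex function at $\rho=-1$ and recognize that the required inequality is an almost immediate consequence of membership in $\mathcal{S}_q^*$. Introducing the shorthand $S(z):=zf'(z)/f(z)$ and $C(z):=1+zf''(z)/f'(z)$, Mocanu's condition for $f$ to be $\rho$-convex becomes $\RE(\rho C+(1-\rho)S)\geq 0$; specializing to $\rho=-1$, this is precisely the requirement $\RE(2S-C)\geq 0$. So the whole task reduces to establishing this single real-part inequality.

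First I would invoke the hypothesis $f\in\mathcal{S}_q^*=\mathcal{S}_q^*(0)$. By the defining inequality~\eqref{class} with $\alpha=0$, we have $\RE(S)>|C-S|$. The key (and only) observation is the elementary bound $\RE(C-S)\leq|C-S|$, valid for any complex number. Combining these yields
\begin{equation*}
\RE(C)=\RE(S)+\RE(C-S)\leq\RE(S)+|C-S|<\RE(S)+\RE(S)=2\RE(S),
\end{equation*}
whence $\RE(2S-C)>0$, which is in fact strictly stronger than the $\rho$-convexity requirement at $\rho=-1$.

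Honestly, there is no genuine obstacle here: the argument is a one-line manipulation once the $\rho=-1$ case of the $\rho$-convex definition is written out explicitly. The only point worth isolating is that the modulus appearing in the definition of $\mathcal{S}_q^*$ automatically dominates the real part $\RE(C-S)$, and it is exactly this mechanism that upgrades the defining inequality into the desired estimate $\RE(C)<2\RE(S)$. I note that this also recovers the convexity inclusion $\mathcal{S}_q^*\subset\mathcal{K}$ of Theorem~\ref{convex} as the limiting case, so the $-1$-convexity statement can be viewed as a sharpening in the same spirit.
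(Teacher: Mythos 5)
Your proof is correct and follows essentially the same route as the paper: both arguments rest on the single observation that the modulus in the defining inequality of $\mathcal{S}_q^*$ dominates $\RE\left(1+\tfrac{zf''(z)}{f'(z)}-\tfrac{zf'(z)}{f(z)}\right)$, which immediately gives $\RE\left(2\tfrac{zf'(z)}{f(z)}-\left(1+\tfrac{zf''(z)}{f'(z)}\right)\right)>0$. No further comment is needed.
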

	\begin{proof}
		Let $f \in \mathcal{S}^*_q$, then we have
		\begin{align*}
			&\RE\left(\dfrac{z f'(z)}{f(z)}\right) > \RE\left(1+\dfrac{z f''(z)}{f'(z)} -\dfrac{z f'(z)}{f(z)}\right), \\  &~ \RE\left(2\dfrac{z f'(z)}{f(z)}- \left(1+\dfrac{z f''(z)}{f'(z)}\right)\right) > 0.
		\end{align*}
		This completes the proof.
	\end{proof}
	The known inclusion $\mathcal{S}_q^*(\alpha) \subset \mathcal{S}^*(q_\alpha)$ and~\eqref{minq} yield the following result.
	\begin{theorem}\label{inc}
		We have $\mathcal{S}_q^*(\alpha) \subset \mathcal{S}^*(q_\alpha) \subset \mathcal{S}^*(\gamma)$ whenever $0\leq \gamma\leq \tfrac{2\alpha-1}{2(1-2^{1-2\alpha})}$ for $\alpha \neq 1/2$, or whenever $0 \leq \gamma \leq 1/\log 4$ for $\alpha=1/2$. 
	\end{theorem}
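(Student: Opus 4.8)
The plan is to combine the already-established containment $\mathcal{S}_q^*(\alpha) \subset \mathcal{S}^*(q_\alpha)$ from Theorem~\ref{firstthm} with the minimum-real-part estimate~\eqref{minq}; since the first inclusion is in hand, all that remains is to verify the second inclusion $\mathcal{S}^*(q_\alpha) \subset \mathcal{S}^*(\gamma)$ for the stated range of $\gamma$.

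First I would take $f \in \mathcal{S}^*(q_\alpha)$, so that by Definition~\ref{sq} the subordination $zf'(z)/f(z) \prec q_\alpha(z)$ holds. Because $q_\alpha$ is univalent, this is equivalent to saying that $zf'(z)/f(z)$ maps $\mathbb{D}$ into $q_\alpha(\mathbb{D})$, whence for every $z \in \mathbb{D}$,
\[
\RE\left(\dfrac{zf'(z)}{f(z)}\right) \geq \min_{|w|<1}\RE q_\alpha(w).
\]
The right-hand side is furnished by~\eqref{minq} as $q_\alpha(-1)$, equal to $\tfrac{2\alpha-1}{2-2^{2(1-\alpha)}}$ for $\alpha\neq 1/2$ and $1/\log 4$ for $\alpha=1/2$. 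Next I would record the elementary identity $2\bigl(1-2^{1-2\alpha}\bigr) = 2 - 2^{2(1-\alpha)}$, which shows that the threshold $\tfrac{2\alpha-1}{2(1-2^{1-2\alpha})}$ appearing in the statement is precisely $q_\alpha(-1)$. Since this extremal value is attained only at the boundary point $z=-1 \notin \mathbb{D}$, the inequality is in fact strict on $\mathbb{D}$, giving $\RE\bigl(zf'(z)/f(z)\bigr) > q_\alpha(-1) \geq \gamma$ for any admissible $\gamma$, so that $f \in \mathcal{S}^*(\gamma)$ and the chain of inclusions is complete.

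I do not anticipate a substantive obstacle, as both the first inclusion and the minimum formula~\eqref{minq} are already granted. The only genuine care needed is twofold: the strictness of the inequality (to match the open condition $\RE(\cdot) > \gamma$ defining $\mathcal{S}^*(\gamma)$, which follows from the boundary attainment of the minimum together with the openness of the image $zf'(\mathbb{D})/f(\mathbb{D})$), and the algebraic verification that the bound in the statement coincides with the extremal value $q_\alpha(-1)$.
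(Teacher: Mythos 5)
Your argument is correct and is exactly the route the paper takes: it derives the theorem by combining the inclusion $\mathcal{S}_q^*(\alpha)\subset\mathcal{S}^*(q_\alpha)$ from Theorem~\ref{firstthm} with the minimum principle~\eqref{minq}, and your algebraic check that $2\bigl(1-2^{1-2\alpha}\bigr)=2-2^{2(1-\alpha)}$ correctly identifies the stated threshold with $q_\alpha(-1)$. The paper states this as an immediate consequence without writing out the details; your handling of the strict inequality via boundary attainment is a sound way of filling them in.
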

	\begin{theorem}\label{gammathm}
		Let $f \in \mathcal{S}^*(q_\alpha)$. Then we have
		\begin{equation}\label{gamma}
			\RE \dfrac{f(z)}{z} > \gamma(\alpha):=\begin{cases}\dfrac{3-2\alpha-2^{2(1-\alpha)}}{4-2\alpha-3.2^{1-2\alpha}}, & \alpha \neq 1/2 \\ & \\  \dfrac{2(1-\log 4)}{2-3 \log 4}, & \alpha = 1/2,\end{cases}
		\end{equation}
		which also implies $\mathcal{S}_q^*(\alpha) \subset \mathcal{S}^*(q_\alpha) \subset \mathcal{Q}(\gamma(\alpha))$.
	\end{theorem}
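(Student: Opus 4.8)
My strategy is to pass through the starlikeness order, since this is what produces the precise constant in \eqref{gamma}. By Theorem~\ref{inc} together with \eqref{minq}, every $f\in\mathcal S^*(q_\alpha)$ lies in $\mathcal S^*(\delta_0)$, where $\delta_0:=q_\alpha(-1)=\frac{2\alpha-1}{2-2^{2(1-\alpha)}}$ for $\alpha\neq1/2$ and $\delta_0=1/\log4$ for $\alpha=1/2$. I would then invoke the auxiliary estimate that a starlike function of order $\delta$ satisfies $\RE\big(f(z)/z\big)>\frac{2(1-\delta)}{3-2\delta}$: writing $g=f/z$, the relation $\frac{zg'(z)}{g(z)}=\frac{zf'(z)}{f(z)}-1\prec\frac{2(1-\delta)z}{1-z}$ controls $g$, and the corresponding real-part estimate delivers this lower bound. (As an independent corroboration, the subordination $f(z)/z\prec f_\alpha(z)/z$ recorded before the statement, combined with the symmetry of $f_\alpha/z$ about the real axis, already forces $\RE(f(z)/z)$ to be bounded below by $\min_{|z|<1}\RE(f_\alpha(z)/z)$.)

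It then remains to specialise and simplify. Substituting $\delta=\delta_0$ into $\frac{2(1-\delta)}{3-2\delta}$ and using the identities $2-2^{2(1-\alpha)}=(2\alpha-1)/\delta_0$ and $2^{2(1-\alpha)}=2\cdot2^{1-2\alpha}$ converts $\frac{2(1-\delta_0)}{3-2\delta_0}$ into exactly the expression $\gamma(\alpha)$ of \eqref{gamma}; the case $\alpha=1/2$ is handled identically with $\delta_0=1/\log4$, where the same reduction yields $\frac{2(1-\log4)}{2-3\log4}$. Finally, $\mathcal S^*_q(\alpha)\subset\mathcal S^*(q_\alpha)\subset\mathcal Q(\gamma(\alpha))$ follows at once from Theorem~\ref{firstthm} and the definition of $\mathcal Q(\gamma(\alpha))$.

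I expect the main obstacle to be the auxiliary order-$\delta$ estimate: isolating the particular rational constant $\frac{2(1-\delta)}{3-2\delta}$ from the subordination $\frac{zg'}{g}\prec\frac{2(1-\delta)z}{1-z}$ is delicate, and one must verify it holds throughout the relevant range $\delta\in[1/2,1)$. The subsequent passage to the closed form \eqref{gamma} is only algebra, but is best confirmed symbolically. A useful consistency check is $\alpha=0$: there $\delta_0=1/2$ and $\gamma(0)=1/2$, so the theorem recovers the classical inclusion $\mathcal S^*(1/2)\subset\mathcal Q(1/2)$ noted in the introduction, with the constant $1/2$ known to be best possible.
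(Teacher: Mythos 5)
Your overall architecture coincides with the paper's: the proof does route through the starlikeness order $\alpha':=q_\alpha(-1)$ supplied by Theorem~\ref{inc}, the conversion $\delta\mapsto\tfrac{2(1-\delta)}{3-2\delta}$ is exactly the relation $\alpha'=\tfrac{2-3\gamma}{2(1-\gamma)}$ the paper exploits, and your algebra reducing $\tfrac{2(1-\alpha')}{3-2\alpha'}$ to the closed form \eqref{gamma} (including the $\alpha=1/2$ case and the check $\gamma(0)=1/2$) is correct, as is the final inclusion statement. The problem is that the ``auxiliary order-$\delta$ estimate'' you invoke --- that $\RE\bigl(zf'(z)/f(z)\bigr)>\delta$ implies $\RE\bigl(f(z)/z\bigr)>\tfrac{2(1-\delta)}{3-2\delta}$ --- is the entire analytic content of the theorem, and you neither prove it nor cite a quotable source for it; you explicitly flag it as unverified. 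Worse, the one-line justification you sketch does not lead to that constant: from $\tfrac{zg'(z)}{g(z)}\prec\tfrac{2(1-\delta)z}{1-z}$ the natural conclusion is $g(z)\prec(1-z)^{-2(1-\delta)}$, whose real part on $\mathbb{D}$ is bounded below (for $\delta\ge 1/2$) by $2^{2\delta-2}$, a \emph{different} number that happens to dominate $\tfrac{2(1-\delta)}{3-2\delta}$ for $\delta>1/2$. So your route, carried out honestly, either proves a different (stronger) statement or proves nothing, but it does not ``deliver this lower bound.''

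What the paper actually does to manufacture the rational constant is a Miller--Mocanu admissibility argument: set $p(z)=\tfrac{1}{1-\gamma}\bigl(\tfrac{f(z)}{z}-\gamma\bigr)$, so that $\tfrac{zf'(z)}{f(z)}=\xi(p(z),zp'(z))$ with $\xi(r,s)=1+\tfrac{(1-\gamma)s}{(1-\gamma)r+\gamma}$, and verify that for all real $\tau$ and $\lambda\le-\tfrac12(1+\tau^2)$ one has $\RE\,\xi(i\tau,\lambda)\le\tfrac{2-3\gamma}{2(1-\gamma)}=\alpha'$ (this uses $\gamma\le 1/2$, i.e.\ $\alpha'\ge 1/2$); Theorem~2.3i of \cite{miler2} then forces $\RE p(z)>0$. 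If you want to repair your write-up you must either reproduce such an admissibility computation or commit to the subordination route and accept the constant $4^{\alpha'-1}$ instead. Finally, your parenthetical ``corroboration'' via $f(z)/z\prec f_\alpha(z)/z$ bounds $\RE(f(z)/z)$ by $\inf_{|z|<1}\RE\bigl(f_\alpha(z)/z\bigr)$, which is not $\gamma(\alpha)$ (at $\alpha=1/2$ it is $\log 2\approx0.693$ versus $\gamma(1/2)\approx0.358$), so it corroborates the existence of a positive lower bound but not the specific constant claimed.
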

	\begin{proof}
		For brevity, let us denote $\gamma(\alpha)=:\gamma$. We observe that $0<\gamma \leq 1/2$ for $0\leq \alpha<1$. Let us define an analytic function $p$ with $p(0)=1$ as
		\begin{equation*}
			p(z)=\dfrac{1}{1-\gamma}\left(\dfrac{f(z)}{z}-\gamma\right),	
		\end{equation*}
		which upon simplification gives
		\begin{equation*}
			\dfrac{z f'(z)}{f(z)} =1+\dfrac{(1-\gamma)zp'(z)}{(1-\gamma)p(z)+\gamma}=:\xi(p(z),zp'(z)),
		\end{equation*}
		where $$\xi(r,s):= 1+\dfrac{(1-\gamma)s}{(1-\gamma)r+\gamma}.$$ Let $\alpha':= \tfrac{2\alpha-1}{2(1-2^{1-2\alpha})}$ $(\alpha
		\neq 1/2)$ and otherwise $\alpha':=\tfrac{1}{\log 4}$. As $f \in \mathcal{S}^*(q_\alpha)$, then Theorem~\ref{inc} yields  $$\xi(p(z),zp'(z))\subset \{w \in \mathbb{C}: \RE w> \alpha'\} =:\Omega_{\alpha'}.$$ Let $\lambda, \tau \in \mathbb{R}$ be such that $\lambda \leq \tfrac{-1}{2}(1+\tau^2)$. Then we have
		\begin{align}\label{2}
			\RE\{\xi(i \tau,\lambda)\}&= \RE\left(1+\dfrac{\lambda(1-\gamma)}{(1-\gamma) i\tau +\gamma}\right)\nonumber \\&=1+\dfrac{\lambda \gamma (1-\gamma)}{(1-\gamma)^2\tau^2+\gamma^2} \nonumber\\&\leq 1-\left(\dfrac{\gamma(1-\gamma)}{2}\right)\left(\dfrac{(1+\tau^2)}{(1-\gamma)^2\tau^2+\gamma^2}\right)=1-\dfrac{\gamma(1-\gamma)}{2} h(\tau),
		\end{align} 
		where $h(\tau):=\dfrac{(1+\tau^2)}{(1-\gamma)^2\tau^2+\gamma^2}$, is a decreasing function of $\tau$ for all $\gamma$, we deduce the following
		\begin{equation*}
			\dfrac{1}{(1-\gamma)^2}\leq h(\tau^2) \leq \dfrac{1}{\gamma^2}. 
		\end{equation*} 
		Using the above inequality in~\eqref{2}, we obtain
		\begin{equation*}
			\RE\{\xi(i \tau,\lambda)\}\leq 1-\dfrac{\gamma(1-\gamma)}{2(1-\gamma)^2}=\dfrac{2-3\gamma}{2(1-\gamma)}=\alpha'.
		\end{equation*}
		This clearly shows that $\RE\{\xi(i \tau,\lambda)\} \notin \Omega_{\alpha'}$. Now, from~\cite[Theorem 2.3i., p.35]{miler2}, we conclude $\RE p(z) >0$ for $z \in \mathbb{D}$. Therefore we obtain $\mathcal{S}^*(q_\alpha) \subset Q(\gamma(\alpha))$. In addition, the fact $\mathcal{S}_q^*(\alpha) \subset \mathcal{S}^*(q_\alpha)$ proves the result .
	\end{proof}
	\section{Radius Problems}\label{sec5}
	\begin{theorem}
		Let $f \in \mathcal{SL}^*$. Then $f \in \mathcal{S}^*_{q}(\alpha)$ whenever $|z|<\tilde{r}(\alpha)<1$, where $\tilde{r}(\alpha)$ is the smallest positive root of
		\begin{equation}\label{alpha}2(1-r)(\sqrt{1-r}-\alpha)-r=0 \quad (0\leq\alpha<1).\end{equation} The result is sharp.
	\end{theorem}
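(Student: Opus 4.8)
The plan is to translate membership in $\mathcal{S}^*_q(\alpha)$ into a condition on $p(z) := zf'(z)/f(z)$ and then exploit the subordination defining $\mathcal{SL}^* = \mathcal{S}^*(\sqrt{1+z})$. As already recorded in the proof of Theorem~\ref{firstthm}, the identity $1 + zf''(z)/f'(z) - zf'(z)/f(z) = zp'(z)/p(z)$ holds, so that $f \in \mathcal{S}^*_q(\alpha)$ is equivalent to $\RE p(z) > |zp'(z)/p(z) - \alpha|$. Since $f \in \mathcal{SL}^*$ means $p(z) \prec \sqrt{1+z}$, I would write $p(z) = \sqrt{1 + w(z)}$ for a Schwarz function $w$ (so $w(0) = 0$ and $|w(z)| \le 1$), which gives the convenient formula $zp'(z)/p(z) = zw'(z)/\bigl(2(1 + w(z))\bigr)$.

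Then I need two estimates on the circle $|z| = r$. First, a lower bound for $\RE p$: because $|w(z)| \le |z| = r$ by Schwarz's lemma, $p(z)$ lies in the image $\{\sqrt{1+\zeta} : |\zeta| \le r\}$, whose minimum real part equals $\sqrt{1-r}$ and is attained at $\zeta = -r$; hence $\RE p(z) \ge \sqrt{1-r}$. Second, an upper bound for $|zp'/p|$: using the Schwarz--Pick estimate $|w'(z)| \le (1 - |w(z)|^2)/(1 - r^2)$ together with $|1 + w(z)| \ge 1 - |w(z)|$ and $|w(z)| \le r$, a short computation gives
\[
\left|\frac{zp'(z)}{p(z)}\right| \le \frac{r(1 + |w(z)|)}{2(1 - r^2)} \le \frac{r}{2(1-r)}.
\]
Consequently $|zp'(z)/p(z) - \alpha| \le r/(2(1-r)) + \alpha$.

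Combining the two estimates, $f \in \mathcal{S}^*_q(\alpha)$ holds as soon as $\sqrt{1-r} > r/(2(1-r)) + \alpha$, which upon clearing the positive denominator $2(1-r)$ is precisely $2(1-r)(\sqrt{1-r} - \alpha) - r > 0$. Since the left-hand side equals $2(1-\alpha) > 0$ at $r = 0$ and tends to $-1$ as $r \to 1^-$, the intermediate value theorem furnishes a smallest positive root $\tilde r(\alpha) \in (0,1)$ of~\eqref{alpha}, and the strict inequality holds for all $0 < r < \tilde r(\alpha)$; this proves the inclusion for $|z| < \tilde r(\alpha)$.

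For sharpness, I would test the extremal function $f$ determined by $p(z) = \sqrt{1+z}$ (i.e. $w(z) = z$) at the real point $z = -r$. There $\RE p(-r) = \sqrt{1-r}$ meets the lower bound, while $zp'/p = z/(2(1+z))$ evaluates to $-r/(2(1-r))$, so $|zp'/p - \alpha| = r/(2(1-r)) + \alpha$ meets the upper bound; equality $\RE p = |zp'/p - \alpha|$ then occurs exactly when $r = \tilde r(\alpha)$, showing the radius cannot be enlarged. The main point to watch is that the two one-sided estimates are \emph{simultaneously} attained by this single function at $z = -r$ — it is the coincidence of the worst case for $\RE p$ and for $|zp'/p - \alpha|$ at the same boundary point that makes $\tilde r(\alpha)$ sharp; verifying the geometry behind $\min_{|\zeta|\le r}\RE\sqrt{1+\zeta} = \sqrt{1-r}$ and the Schwarz--Pick step are the only places requiring real care.
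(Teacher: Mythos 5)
Your proposal is correct and follows essentially the same route as the paper: the same reduction to $\RE\sqrt{1+w(z)} > \left|\tfrac{zw'(z)}{2(1+w(z))}-\alpha\right|$, the same lower bound $\sqrt{1-r}$ for the real part, the same Schwarz--Pick bound $\tfrac{|z||w'(z)|}{2(1-|w(z)|)}\le\tfrac{r}{2(1-r)}$, and the same sharpness witness (the paper writes out the $\mathcal{SL}^*$ extremal $\tilde f(z)=4z\exp(2(\sqrt{1+z}-1))/(1+\sqrt{1+z})^2$ explicitly, which is exactly your function with $w(z)=z$, evaluated at $z=-\tilde r(\alpha)$). Your added remarks on the intermediate value theorem and on why the two worst cases coincide at $z=-r$ only make the argument more complete than the paper's.
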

	\begin{proof}
		For $f \in \mathcal{SL}^*$, there exists a Schwarz function $\upsilon(z)= R e^{i t}$ $(0\leq R\leq r=|z|<1;0\leq t \leq 2\pi)$ such that
		$z f'(z)/f(z) =\sqrt{1+\upsilon(z)}.$  
		A computation shows
		\begin{equation}\label{eq} \min_{|\upsilon(z)|=R}\RE\left(\sqrt{1+\upsilon(z)}\right)= \sqrt{1-R} \geq  \sqrt{1-r}\end{equation} and the following from Schwarz Pick inequality
		\begin{equation}\label{sch}
			\dfrac{|z||\upsilon'(z)|}{1-|\upsilon(z)|} \leq \dfrac{|z|(1+|\upsilon(z)|)}{1-|z|^2} \leq \dfrac{|z|}{1-|z|}.
		\end{equation}
		To prove the result, it suffices to show~\eqref{class} holds.  Therefore we consider
		\begin{align}\label{class1}
			\RE\left(\sqrt{1+\upsilon(z)}\right) - \left|\dfrac{z \upsilon'(z)}{2(1+\upsilon(z))}-\alpha\right| & \geq \RE\left(\sqrt{1+\upsilon(z)}\right)-\alpha-\dfrac{|z||\upsilon'(z)|}{2(1-|\upsilon(z)|)}\nonumber \\ & \geq \sqrt{1-r}-\alpha-\left(\dfrac{r}{2(1-r)}\right), 
		\end{align}
		using the inequalities~\eqref{eq} and~\eqref{sch} with $|z|=r$.
		A calculation reveals~\eqref{class1} further becomes greater than $0$ provided  $r< \tilde{r}(\alpha)$.
		For sharpness, let us consider a function $$\tilde{f}(z) = \dfrac{4 z \exp({2 (\sqrt{1 + z} - 1)})}{(1 +\sqrt{1 + z})^2},$$ belonging to $\mathcal{SL}^*$. 
		Let $\alpha$ be given by~\eqref{alpha}, then equality holds in~\eqref{class} for $\tilde{f}(z)$ at $z=-\tilde{r}(\alpha)$. 
	\end{proof}
	\begin{theorem} 
		Let $f \in \mathcal{S}^*_{l}$. Then $f \in \mathcal{S}^*_{q}(\alpha)$ whenever $|z|<\tilde{r}(\alpha)<1$, where $\tilde{r}(\alpha)$ is the smallest positive root of
		\begin{equation*}
			(1-\log(1+r))(1-r)(1-\log(1+r)-\alpha)-r=0 \quad (0\leq \alpha<1).
		\end{equation*} 
	\end{theorem}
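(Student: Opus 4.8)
The plan is to mirror the proof of the preceding $\mathcal{SL}^*$ radius theorem, replacing the subordinating function $\sqrt{1+\upsilon}$ by the one defining $\mathcal{S}^*_l$. Writing $p(z)=zf'(z)/f(z)$, a logarithmic differentiation of $p$ gives the identity $1+zf''(z)/f'(z)-zf'(z)/f(z)=zp'(z)/p(z)$, so the defining inequality \eqref{class} of $\mathcal{S}^*_q(\alpha)$ becomes
\[
\RE p(z) > \left| \frac{zp'(z)}{p(z)} - \alpha \right|.
\]
Since $f\in\mathcal{S}^*_l$ means $zf'(z)/f(z)\prec 1-\log(1+z)$, there is a Schwarz function $\upsilon$ with $p(z)=1-\log(1+\upsilon(z))$, and hence $zp'(z)/p(z)=-z\upsilon'(z)/\big((1+\upsilon(z))(1-\log(1+\upsilon(z)))\big)$.

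Next I would establish the two pointwise estimates that drive the argument, parallel to \eqref{eq} and \eqref{sch}. For the real part, since $\RE(1-\log(1+\upsilon))=1-\log|1+\upsilon|$ and $|1+\upsilon|$ attains its maximum $1+R$ on $|\upsilon|=R$ at $\upsilon=R$, we obtain
\[
\min_{|\upsilon(z)|=R}\RE p(z)=1-\log(1+R)\geq 1-\log(1+r),
\]
using $R=|\upsilon(z)|\leq|z|=r$; note $1-\log(1+r)>0$ throughout $0\leq r<1$. For the quotient, $\RE p(z)>0$ yields $|p(z)|\geq \RE p(z)\geq 1-\log(1+r)$, while the Schwarz--Pick estimate used in \eqref{sch} gives $|z||\upsilon'(z)|/|1+\upsilon(z)|\leq |z||\upsilon'(z)|/(1-|\upsilon(z)|)\leq r/(1-r)$. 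Combining these,
\[
\left| \frac{zp'(z)}{p(z)} \right| \leq \frac{r}{(1-r)\,(1-\log(1+r))}.
\]

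Then, as in \eqref{class1}, I would bound
\[
\RE p(z) - \left| \frac{zp'(z)}{p(z)} - \alpha \right| \geq (1-\log(1+r)) - \alpha - \frac{r}{(1-r)(1-\log(1+r))},
\]
and require the right-hand side to be positive. Clearing the positive factor $(1-r)(1-\log(1+r))$ turns this into exactly
\[
(1-\log(1+r))(1-r)(1-\log(1+r)-\alpha) - r > 0.
\]
At $r=0$ the left side equals $1-\alpha>0$, while as $r\to 1^-$ it tends to $-1$, so by the Intermediate Value Theorem there is a smallest positive root $\tilde r(\alpha)\in(0,1)$, and the inequality---hence \eqref{class}---holds for all $|z|<\tilde r(\alpha)$, giving $f\in\mathcal{S}^*_q(\alpha)$ there.

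I expect the only genuinely delicate point to be the minimum--real-part computation: one must confirm that over the circle $|\upsilon|=R$ the quantity $1-\log|1+\upsilon|$ is minimized precisely at $\upsilon=R$ (equivalently that $|1+\upsilon|$ is maximized there), and that $R\leq r$ lets us pass to the uniform bound $1-\log(1+r)$. The remaining steps are routine majorizations with Schwarz--Pick, arranged so that the two denominators collapse into the stated defining equation for $\tilde r(\alpha)$.
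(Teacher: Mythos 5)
Your proposal is correct and follows essentially the same route as the paper: represent $zf'/f=1-\log(1+\upsilon)$ via a Schwarz function, bound $\RE p$ below by $1-\log(1+r)$ and the quotient $|zp'/p|$ above by $r/\bigl((1-r)(1-\log(1+r))\bigr)$ using the Schwarz--Pick estimate, and reduce to the stated root equation. The only cosmetic difference is that you derive $|1-\log(1+\upsilon)|\geq 1-\log(1+r)$ from $|p|\geq\RE p$ whereas the paper cites this minimum from an earlier reference, and you add the explicit intermediate-value check at the endpoints, which the paper leaves implicit.
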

	\begin{proof}
		For $f \in  \mathcal{S}^*_{l}$, we have $z f'(z)/f(z) = 1-\log(1+\upsilon(z))$, where $\upsilon(z)= R e^{i t}$ $(0\leq R\leq r=|z|<1;0\leq t \leq 2\pi)$. To prove the result, it suffices to show
		\begin{equation*}
			\RE(1-\log(1+\upsilon(z))) - \left|\dfrac{-z \upsilon'(z)}{(1+\upsilon(z))(1-\log(1+\upsilon(z)))}-\alpha\right| >0.
		\end{equation*} We also have
		$$ \min_{|\upsilon(z)|=R} \RE(1-\log(1+\upsilon(z))) \text{ and } \min_{|\upsilon(z)|=R} |1-\log(1+\upsilon(z))| = 1-\log(1+R) \geq 1-\log(1+r),$$ see~\cite{banga}. 
		By taking these inequalities and~\eqref{sch} with $|z|=r$, we obtain
		\begin{align*}
			\left|\dfrac{-z \upsilon'(z)}{(1+\upsilon(z))(1-\log(1+\upsilon(z)))}-\alpha\right| &\leq \dfrac{|z||\upsilon'(z)|}{(1-|\upsilon(z)|)|1-\log(1+\upsilon(z))|}+\alpha \nonumber 
			\\ &  \leq \dfrac{r}{(1-r)(1-\log(1+r))}+\alpha,
		\end{align*} which further shows 
		\begin{align*}
			&\RE(1-\log(1+\upsilon(z))) -   \left|\dfrac{-z \upsilon'(z)}{(1+\upsilon(z))(1-\log(1+\upsilon(z)))}-\alpha\right|\nonumber \\&  \geq  \dfrac{(1-r)(1-\log(1+r))^2-r}{(1-r)(1-\log(1+r))}-\alpha >0,
		\end{align*} 
		provided $r<\tilde{r}(\alpha)$.  
	\end{proof}
	\begin{theorem}\label{ethm}
		Let $f \in \mathcal{S}^*_{e}$. We have $f \in \mathcal{S}^*_{q}(\alpha)$ in $|z| < \tilde{r}(\alpha)$, where $\tilde{r}(\alpha)$ is the smallest positive root of 
		\begin{equation}\label{roote}
			\begin{cases}
				4(1-r^2)(e^{-r}-\alpha)-(1+r^2)^2=0, & 0 \leq \alpha < 0.246646\\ 
				e^{-r} -\alpha -r =0, & 0.246646 \leq \alpha <1.
			\end{cases}
		\end{equation}
	\end{theorem}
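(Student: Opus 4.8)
The plan is to reuse the mechanism of the two preceding radius theorems, now exploiting the exponential representation of $\mathcal{S}^*_e$. For $f\in\mathcal{S}^*_e$ there is a Schwarz function $\upsilon$ with $zf'(z)/f(z)=e^{\upsilon(z)}$. Setting $p(z)=zf'(z)/f(z)=e^{\upsilon(z)}$ and using the identity $zp'(z)/p(z)=1+zf''(z)/f'(z)-zf'(z)/f(z)$, the decisive simplification is that $p=e^{\upsilon}$ forces $zp'(z)/p(z)=z\upsilon'(z)$, with no denominator surviving, so the defining inequality \eqref{class} becomes exactly
\[
\RE\left(e^{\upsilon(z)}\right)>\left|z\upsilon'(z)-\alpha\right|.
\]
First I would record the two elementary inputs. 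The sharp lower bound $\RE(e^{\upsilon(z)})\ge e^{-|\upsilon(z)|}\ge e^{-r}$ follows from $\min_{|w|\le r}\RE e^{w}=e^{-r}$, attained at $w=-r$; and the Schwarz--Pick estimate \eqref{sch}, after clearing the factor $1-|\upsilon|$, yields $|z\upsilon'(z)|\le r(1-|\upsilon(z)|^2)/(1-r^2)$. Writing $R=|\upsilon(z)|\in[0,r]$ and using $|z\upsilon'-\alpha|\le|z\upsilon'|+\alpha$, the whole problem reduces to proving
\[
g(R):=e^{-R}-\frac{r(1-R^2)}{1-r^2}-\alpha>0\qquad(R\in[0,r]).
\]

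The core of the argument is this one-variable minimization, and its two regimes are precisely what generate the two branches of \eqref{roote}. Since $g$ is convex with $g'(0)<0$, its minimum over $[0,r]$ is attained either at the right endpoint or at an interior critical point. At $R=r$ the derivative bound collapses to $r$ and $g(r)=e^{-r}-r-\alpha$, which gives the second branch $e^{-r}-\alpha-r=0$; this endpoint case is exact, since a real Blaschke factor with $\upsilon(-r)=-r$ realizes both the minimal real part and the extremal Schwarz--Pick derivative at $z=-r$ simultaneously. For the interior regime I would pass to a suitable elementary minorant of $e^{-R}$, turning the minimand into a quadratic in $R$; completing the square then produces the term $(1+r^2)^2/(4(1-r^2))$ and hence the first branch $4(1-r^2)(e^{-r}-\alpha)-(1+r^2)^2=0$. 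The two branches are glued along the curve where the endpoint and interior values coincide, and the identity
\[
\frac{(1+r^2)^2}{4(1-r^2)}-r=\frac{(r^2+2r-1)^2}{4(1-r^2)}
\]
shows this happens exactly at $r=\sqrt2-1$, i.e.\ at $\alpha=e^{1-\sqrt2}-(\sqrt2-1)\approx0.246646$, which is the threshold in the statement. Monotonicity of $e^{-r}-r$ and of $(1+r^2)^2/(4(1-r^2))$ in $r$ then guarantees that in each regime $\tilde r(\alpha)$ is the smallest positive root, and that for $|z|<\tilde r(\alpha)$ one has $g(R)>0$ throughout $[0,r]$, i.e.\ $f\in\mathcal{S}^*_q(\alpha)$.

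The step I expect to be the main obstacle is the interior branch. The true minimizer of $g$ solves the transcendental equation $e^{-R}=2rR/(1-r^2)$, so the exact worst-case value is not elementary; obtaining the clean closed form $(1+r^2)^2/(4(1-r^2))$ requires choosing the correct elementary lower bound for $e^{-R}$ on $[0,r]$ and verifying it genuinely is a minorant there, so that the resulting condition is a valid sufficient condition that reproduces \eqref{roote}. A secondary technical point is the bookkeeping that converts the natural threshold $r=\sqrt2-1$ into the stated $\alpha$-cutoff and checks that the first and second branches agree there, confirming that the two-case formula indeed describes a single continuous radius function $\tilde r(\alpha)$.
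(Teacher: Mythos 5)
Your reduction of \eqref{class} to $\RE(e^{\upsilon(z)})>|z\upsilon'(z)-\alpha|$, the lower bound $\RE(e^{\upsilon(z)})\ge e^{-r}$, and the endpoint analysis producing the second branch $e^{-r}-r-\alpha$ all agree with the paper; your identification of the cutoff $\alpha_0=e^{1-\sqrt2}-(\sqrt2-1)\approx 0.246646$ via the identity $\tfrac{(1+r^2)^2}{4(1-r^2)}-r=\tfrac{(r^2+2r-1)^2}{4(1-r^2)}$ is correct and is a nice supplement to the paper's bare numerical threshold.

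The gap is the first branch, and it is not a cosmetic one. The quantity $\tfrac{(1+r^2)^2}{4(1-r^2)}$ is not the outcome of minimizing your $g(R)=e^{-R}-\tfrac{r(1-R^2)}{1-r^2}-\alpha$: it equals $\max_{0\le R\le r}\left(R+\tfrac{r^2-R^2}{1-r^2}\right)$, attained at $R=\tfrac{1-r^2}{2}$, i.e.\ it comes from the refined derivative estimate $|z\upsilon'(z)|\le R+\tfrac{r^2-R^2}{1-r^2}$ obtained by applying Schwarz--Pick to $\upsilon(z)/z$ rather than to $\upsilon$ itself --- this is Dieudonn\'e's lemma, the paper's inequality \eqref{der} --- used \emph{decoupled} from the bound $\RE(e^{\upsilon})\ge e^{-r}$. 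Your plain Schwarz--Pick bound $\tfrac{r(1-R^2)}{1-r^2}$ is a genuinely different function of $R$ (it exceeds the refined bound by $\tfrac{(r-R)(1-R)}{1+r}$ and peaks at $R=0$ with value $\tfrac{r}{1-r^2}$, which is larger than $\tfrac{(1+r^2)^2}{4(1-r^2)}$ for all $r\ge\sqrt2-1$), so no completion of the square applied to $g$, with any linear minorant of $e^{-R}$, yields the stated expression. Moreover your own case split --- endpoint versus interior minimum of $g$ --- occurs where $g'(r)=\tfrac{2r^2}{1-r^2}-e^{-r}$ changes sign, near $r\approx 0.485$, not at $\sqrt2-1$, so the two regimes of your minimization do not line up with the two branches of \eqref{roote}; the unnamed ``suitable elementary minorant'' is the entire missing step, and as sketched it would at best produce a sufficient condition different from \eqref{roote}, which does not prove the theorem as stated. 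The repair is exactly the paper's route: invoke \eqref{der}, namely $|\upsilon'(z)|\le 1$ for $r\le\sqrt2-1$ and $|\upsilon'(z)|\le\tfrac{(1+r^2)^2}{4r(1-r^2)}$ for $r\ge\sqrt2-1$, combine it with $\RE(e^{\upsilon(z)})\ge e^{-r}$ and $|z\upsilon'(z)-\alpha|\le\alpha+|z||\upsilon'(z)|$, and the two cases give the two branches of \eqref{roote} directly.
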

	\begin{proof}
		For $f \in \mathcal{S}^*_{e}$, we have $z f'(z)/f(z) = e^{\upsilon(z)}$, where $|\upsilon(z)| =R \leq r=|z|.$ A straightforward calculation gives
		\begin{equation*}
			\min_{|\upsilon(z)|=r} \RE(e^{\upsilon(z)}) = e^{-R} \geq e^{-r}.\end{equation*}  The following is the well known inequality~\cite{schwarz} for the derivative of Schwarz function
		\begin{equation}\label{der}|\upsilon'(z)| \leq \begin{cases}
				1, & |z| \leq \sqrt{2}-1 \\ \dfrac{(1+r^2)^2}{4r(1-r^2)},& |z| \geq \sqrt{2}-1. 
		\end{cases}\end{equation} Now we show inequality~\eqref{class} holds, by proving $\RE(e^{\upsilon(z)}) - |z \upsilon'(z)-\alpha| >0$. For this, we consider the following with $|z|=r$ and further applying~\eqref{der}, we get
		\begin{align}\label{ez}
			\RE(e^{\upsilon(z)}) - |z \upsilon'(z)-\alpha| \geq \RE(e^{\upsilon(z)}) -\alpha -|z||\upsilon'(z)| \geq \begin{cases} e^{-r}-\alpha-r, & r\leq \sqrt{2}-1 \\ e^{-r}-\alpha-\dfrac{(1+r^2)^2}{4(1-r^2)}, & r \geq \sqrt{2}-1, \end{cases}\end{align} greater than $0$ provided $r<\tilde{r}(\alpha)$, given in~\eqref{roote}. Note that for $\alpha < 0.246646$, $\tilde{r}(\alpha) \in (\sqrt{2}-1,1]$ and for $\alpha \geq 0.246646$, $\tilde{r}(\alpha) \in (0,\sqrt{2}-1]$. This proves the result.
	\end{proof}
	\begin{theorem}\label{sigthm}
		Let $f \in \mathcal{S}^*_{SG}$. We have $f \in \mathcal{S}^*_q(\alpha)$ in $|z| < \tilde{r}(\alpha)$, where $\tilde{r}(\alpha)$ is the smallest positive root of 
		\begin{equation*}	\begin{cases}
				4(1-r^2)(1+e^{-r})(2-\alpha(1+e^r))-(1+r^2)^2(1+e^r)=0, & 0 \leq \alpha < 0.546407 \\ 	(1+e^{-r})(2-\alpha(1+e^r))-r(1+e^r)=0, & 0.546407 \leq \alpha <1.
	\end{cases}\end{equation*}\end{theorem}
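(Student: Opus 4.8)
The plan is to transplant the argument of Theorem~\ref{ethm} for $\mathcal{S}^*_e$ onto the sigmoid map. Since $f\in\mathcal{S}^*_{SG}$, there is a Schwarz function $\upsilon$ with $|\upsilon(z)|=R\le r=|z|$ and $zf'(z)/f(z)=2/(1+e^{-\upsilon(z)})$. Writing $p(z)=zf'(z)/f(z)$ and recalling from the proof of Theorem~\ref{firstthm} that $1+zf''/f'-zf'/f=zp'/p$, the defining inequality~\eqref{class} is exactly $\RE p(z)>|zp'(z)/p(z)-\alpha|$. Logarithmic differentiation of $p=2/(1+e^{-\upsilon})$ gives $p'/p=\upsilon'e^{-\upsilon}/(1+e^{-\upsilon})=\upsilon'/(1+e^{\upsilon})$, so $zp'/p=z\upsilon'/(1+e^{\upsilon})$; this is the sigmoid analogue of the term $z\upsilon'$ appearing in~\eqref{ez}.

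The next step is to produce the two one-sided estimates. For the left-hand side I would establish the lower bound $\RE p(z)\ge 2/(1+e^r)$, i.e. that $\RE(2/(1+e^{-\upsilon}))$ over $|\upsilon|\le r$ is minimized at $\upsilon=-r$; writing $\upsilon=a+ib$ this real part equals $2(1+e^{-a}\cos b)/(1+2e^{-a}\cos b+e^{-2a})$, and a short monotonicity check pins the minimum at $\upsilon=-r$, the sigmoid counterpart of $\RE(e^{\upsilon})\ge e^{-r}$. For the right-hand side, the triangle inequality gives $|zp'/p-\alpha|\le\alpha+|z||\upsilon'|/|1+e^{\upsilon}|$; here I would bound the denominator below by $|1+e^{\upsilon}|\ge 1+e^{-r}$ (since $|1+e^{\upsilon}|^2=1+2e^{a}\cos b+e^{2a}$ is increasing in $a$ when $r<1$, the extremum again sits at $\upsilon=-r$) and bound $|z||\upsilon'|$ by the Schwarz-derivative estimate~\eqref{der}, split at $|z|=\sqrt2-1$.

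Combining these yields $\RE p-|zp'/p-\alpha|\ge \tfrac{2}{1+e^r}-\alpha-\tfrac{|z||\upsilon'|}{1+e^{-r}}$, which for $r\le\sqrt2-1$ is at least $\tfrac{2}{1+e^r}-\alpha-\tfrac{r}{1+e^{-r}}$ and for $r\ge\sqrt2-1$ is at least $\tfrac{2}{1+e^r}-\alpha-\tfrac{(1+r^2)^2}{4(1-r^2)(1+e^{-r})}$. Clearing denominators (multiplying by $(1+e^r)(1+e^{-r})$ in the first case and by $4(1-r^2)(1+e^r)(1+e^{-r})$ in the second, and factoring out $1+e^{-r}$) converts the positivity requirement into precisely the two defining equations $(1+e^{-r})(2-\alpha(1+e^r))-r(1+e^r)=0$ and $4(1-r^2)(1+e^{-r})(2-\alpha(1+e^r))-(1+r^2)^2(1+e^r)=0$. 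Hence $f\in\mathcal{S}^*_q(\alpha)$ for $r<\tilde r(\alpha)$, with $\tilde r(\alpha)$ the smallest positive root.

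It remains to identify the threshold $\alpha=0.546407$ separating the two cases: it is the value for which $\tilde r(\alpha)=\sqrt2-1$, the break-point of~\eqref{der}, where both branches of~\eqref{der} coincide and equal $1$ so the two equations agree. I would obtain it by setting $r=\sqrt2-1$ in the small-$r$ equation and solving for $\alpha$, then checking monotonicity of $\tilde r$ in $\alpha$ to confirm which branch governs which $\alpha$-range. The main obstacle I anticipate is not the algebra but the two extremal facts — that both $\RE(2/(1+e^{-\upsilon}))$ and $|1+e^{\upsilon}|$ attain their minima on $|\upsilon|\le r$ at the single real point $\upsilon=-r$ — which genuinely use $r<1$ and a direct monotonicity analysis rather than any maximum-principle shortcut.
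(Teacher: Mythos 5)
Your proposal is correct and follows essentially the same route as the paper: the same reduction of \eqref{class} to $\RE p > |zp'/p-\alpha|$ with $p=2/(1+e^{-\upsilon})$, the same lower bound $2/(1+e^{r})$ for $\RE p$, the same bounds $|1+e^{\upsilon}|\ge 1+e^{-r}$ and \eqref{der} split at $|z|=\sqrt{2}-1$, and the same clearing of denominators to reach the two defining equations. The only difference is that you spell out the monotonicity arguments for the two extremal facts (which the paper delegates to~\cite{priyanka} and to ``a computation shows'') and the determination of the threshold $\alpha=0.546407$, which the paper merely asserts.
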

	\begin{proof}
		For $f \in \mathcal{S}^*_{SG}$, we have $z f'(z)/f(z) = 2/(1+e^{-\upsilon(z)})$, for some Schwarz function $\upsilon(z)= R e^{it}$, $(0\leq R\leq r=|z|<1;0\leq t \leq 2\pi)$. To prove the result, it suffices to show~\eqref{class} holds for $f$ in consideration here. 
		Therefore we consider the following 
		\begin{equation}\label{eq4}
			\RE\left(\dfrac{2}{1+e^{-\upsilon(z)}}\right) -\left|\dfrac{z \upsilon'(z)e^{-\upsilon(z)}}{1+e^{-\upsilon(z)}}-\alpha\right|\geq \RE\left(\dfrac{2}{1+e^{-\upsilon(z)}}\right) -\alpha -\dfrac{|z| |\upsilon'(z)|}{|1+e^{\upsilon(z)}|}
		\end{equation}
		From \cite{priyanka}, we have $\min_{|\upsilon(z)|=R} \RE(2/(1+e^{\upsilon(z)})) = 2/(1+e^R) \geq 2/(1+e^r)$ and a computation shows that
		$$ \min_{|\upsilon(z)|=R} |1+e^{\upsilon(z)}| = 1+e^{-R} \geq  1+e^{-r}.$$ Using these inequalities in right side of the inequality~\eqref{eq4} and further applying~\eqref{der} with $|z|=r$,~\eqref{eq4} finally reduces to 
		\begin{align*}
			\RE\left(\dfrac{2}{1+e^{-\upsilon(z)}}\right) -\left|\dfrac{z \upsilon'(z)e^{-\upsilon(z)}}{1+e^{-\upsilon(z)}}-\alpha\right|   \geq  \begin{cases} \dfrac{2}{1+e^{r}} -\alpha-\left(\dfrac{r}{1+e^{-r}}\right),&r \leq \sqrt{2}-1 \\ & \\ \dfrac{2}{1+e^{r}} -\alpha-\left(\dfrac{(1+r^2)^2}{4(1-r^2)(1+e^{-r})}\right),&r \geq \sqrt{2}-1, \end{cases}
		\end{align*}
		greater than $0$ provided $r<\tilde{r}(\alpha)$. Note that for $\alpha <0.546407$, $\tilde{r}(\alpha) \in (\sqrt{2}-1,1]$ and for $\alpha \geq 0.546407$, $\tilde{r}(\alpha) \in (0,\sqrt{2}-1]$.
		This completes the proof. 
	\end{proof}
	\begin{theorem}\label{ssthm}
		Let $f \in \mathcal{S}^*_{S}$.Then $f \in \mathcal{S}^*_{q}(\alpha)$ in $|z|< \tilde{r}(\alpha)$, where $\tilde{r}(\alpha)$ is the smallest positive root of 
		\begin{equation*}
			(1-\sin r)(1-\sin r \cosh r-\alpha)- r \cosh r=0 \quad (0\leq \alpha <1).
		\end{equation*}
	\end{theorem}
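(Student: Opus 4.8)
The plan is to follow the template established in the proofs of Theorems~\ref{ethm} and~\ref{sigthm}, now with the Ma--Minda function $\phi(z)=1+\sin z$. Since $f\in\mathcal{S}^*_{S}$, there is a Schwarz function $\upsilon(z)=Re^{it}$ $(0\le R\le r=|z|<1;\,0\le t\le 2\pi)$ with $zf'(z)/f(z)=1+\sin\upsilon(z)$. Writing $p(z)=zf'(z)/f(z)$ and using the identity $1+zf''(z)/f'(z)-zf'(z)/f(z)=zp'(z)/p(z)$ already exploited in Theorem~\ref{firstthm}, together with $p'(z)=\upsilon'(z)\cos\upsilon(z)$, the defining inequality~\eqref{class} reduces to showing
$$\RE\bigl(1+\sin\upsilon(z)\bigr)-\left|\dfrac{z\,\upsilon'(z)\cos\upsilon(z)}{1+\sin\upsilon(z)}-\alpha\right|>0.$$
Thus it suffices to bound the real part from below and the modulus term from above.

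For the lower bound I would use $\RE\sin(x+iy)=\sin x\cosh y\ge-\sin R\cosh R$ (valid because $|x|,|y|\le R<\pi/2$, with $\sin$ and $\cosh$ monotone there), giving
$$\RE\bigl(1+\sin\upsilon(z)\bigr)\ge 1-\sin R\cosh R\ge 1-\sin r\cosh r,$$
where the last step uses that $t\mapsto\sin t\cosh t$ is increasing on $(0,\pi/2)$. For the modulus term I first apply the triangle inequality to split off $\alpha$, then estimate the remaining factors separately: the computation $|\cos\upsilon|^{2}=\cos^{2}x+\sinh^{2}y\le\cosh^{2}r$ yields $|\cos\upsilon(z)|\le\cosh r$; the sharp minimum $\min_{|\upsilon|=R}|1+\sin\upsilon|=1-\sin R\ge 1-\sin r$ (cf.~\cite{cho,banga}) bounds the denominator from below; and for $|z\upsilon'(z)|$ I invoke~\eqref{der}. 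Since $\tilde r(\alpha)$ will lie in $(0,\sqrt2-1]$, the first branch $|\upsilon'(z)|\le 1$ applies, so $|z\upsilon'(z)|\le r$. Combining these estimates gives
$$\RE\bigl(1+\sin\upsilon(z)\bigr)-\left|\dfrac{z\,\upsilon'(z)\cos\upsilon(z)}{1+\sin\upsilon(z)}-\alpha\right|\ge(1-\sin r\cosh r)-\alpha-\dfrac{r\cosh r}{1-\sin r}.$$
Multiplying through by the positive quantity $1-\sin r$, the right-hand side is positive precisely when $(1-\sin r)(1-\sin r\cosh r-\alpha)-r\cosh r>0$, so the stated $\tilde r(\alpha)$, the smallest positive root of this expression, is exactly the radius up to which~\eqref{class} is guaranteed.

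The main obstacle I anticipate is the bookkeeping around the Schwarz-derivative estimate: one must verify a posteriori that $\tilde r(\alpha)\le\sqrt2-1$ for every $0\le\alpha<1$, for otherwise the second branch of~\eqref{der} would contribute an extra factor $(1+r^{2})^{2}/(4(1-r^{2}))$ and force the defining equation to split into two cases, as in Theorems~\ref{ethm} and~\ref{sigthm}. This reduces to evaluating the single-branch expression at $r=\sqrt2-1$ and confirming it is negative for all admissible $\alpha$ (its smallest positive root then necessarily falls below $\sqrt2-1$); this sign/monotonicity check is the only genuinely nonroutine part. A secondary subtlety worth flagging is that the lower bound $1-\sin r\cosh r$ used here for $\RE(1+\sin\upsilon)$ is deliberately coarser than the sharp value $1-\sin r$ attained at $\upsilon=-r$; it is this choice that produces the defining equation in precisely the stated form.
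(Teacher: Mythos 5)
Your proposal is correct and follows essentially the same route as the paper's proof: the same lower bound $\RE(1+\sin\upsilon(z))\ge 1-\sin r\cosh r$, the same estimates $|\cos\upsilon(z)|\le\cosh r$ and $|1+\sin\upsilon(z)|\ge 1-\sin r$, and the first branch of~\eqref{der} justified by $\tilde r(\alpha)\le\sqrt2-1$. Your explicit flagging of the a posteriori check that $\tilde r(\alpha)\le\sqrt2-1$ is a point the paper asserts without verification, so that remark is a small improvement rather than a divergence.
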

	\begin{proof}
		Let $f \in \mathcal{S}^*_{S}$. Then for some Schwarz function $\upsilon(z)= R e^{it}$ $(0\leq R\leq r=|z|<1; 0\leq t\leq 2\pi)$, we have $z f'(z)/f(z) = 1+\sin(\upsilon(z)).$ To prove the result, we show~\eqref{class} holds for $f$ considered here. 
		For this we consider 
		\begin{align}\label{ss1}
			\RE\left(1+\sin(\upsilon(z))\right) -\left|\dfrac{z \upsilon'(z)\cos(\upsilon(z))}{1+\sin(\upsilon(z))}-\alpha\right| & \geq \RE\left(1+\sin(\upsilon(z))\right) -\alpha -\dfrac{|z| |\upsilon'(z)||\cos(\upsilon(z))|}{|1+\sin(\upsilon(z))|}
		\end{align}A calculation shows that $\RE(1+\sin(\upsilon(z)))= 1+\sin(R \cos t)\cosh(R \sin t)$ and $$\sin(R \cos t) \geq -\sin R, \quad  \cosh(R \sin t) \leq \cosh(R).$$ Thus we have $$\RE(1+\sin(\upsilon(z))) \geq 1-\sin R \cosh R \geq 1-\sin r \cosh r.$$ Also 
		$$ \max_{|\upsilon(z)|=R}|\cos(\upsilon(z))| = \cosh R \leq \cosh r \text{ and } \min_{|\upsilon(z)|=R}|1+\sin(\upsilon(z)| = 1-\sin R \geq 1-\sin r.$$ Using these inequalities in right side of the inequality~\eqref{ss1} and further applying~\eqref{der} with $|z|=r$,~\eqref{ss1} finally reduces to
		\begin{align*}
			\RE\left(1+\sin(\upsilon(z))\right) -\left|\dfrac{z \upsilon'(z)\cos(\upsilon(z))}{1+\sin(\upsilon(z))}-\alpha\right| \geq 1-\sin r \cosh r -\alpha - \left(\dfrac{r \cosh r}{1-\sin r}\right)>0,
		\end{align*}
		provided $r < \tilde{r}(\alpha)\leq\sqrt{2}-1$ for every $0 \leq \alpha<1$. 
	\end{proof}
	\begin{theorem}\label{sc}
		Let $f \in \mathcal{S}^*_{C}$ and $0\leq \alpha<3/4$. Then $f \in \mathcal{S}^*_{q}(\alpha)$ in $|z| <\tilde{r}(\alpha)$, where $\tilde{r}(\alpha)$ is the smallest positive root of
		$$(3-2r^2)(1-r^2-2\alpha)-6\sqrt{3}r(1+r)=0 \quad (0\leq \alpha<3/4).$$ 
	\end{theorem}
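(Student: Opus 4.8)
The plan is to follow verbatim the proof architecture already established for Theorems~\ref{ethm} and~\ref{sigthm}, since $\mathcal{S}^*_C$ is another Ma--Minda class $\mathcal{S}^*(\phi)$ with $\phi(z)=1+\tfrac{4}{3}z+\tfrac{2}{3}z^2$. First I would write $z f'(z)/f(z) = 1+\tfrac{4}{3}\upsilon(z)+\tfrac{2}{3}\upsilon(z)^2$ for a Schwarz function $\upsilon$ with $|\upsilon(z)|=R\le r=|z|$, and compute
\begin{equation*}
1+\dfrac{z f''(z)}{f'(z)}-\dfrac{z f'(z)}{f(z)} = \dfrac{z\,(\text{d}/\text{d}z)\!\left(1+\tfrac{4}{3}\upsilon+\tfrac{2}{3}\upsilon^2\right)}{1+\tfrac{4}{3}\upsilon+\tfrac{2}{3}\upsilon^2} = \dfrac{z\upsilon'(z)\left(\tfrac{4}{3}+\tfrac{4}{3}\upsilon\right)}{1+\tfrac{4}{3}\upsilon+\tfrac{2}{3}\upsilon^2}.
\end{equation*}
The goal is to show the defining inequality~\eqref{class} holds, i.e. that $\RE(1+\tfrac{4}{3}\upsilon+\tfrac{2}{3}\upsilon^2) - \left|\tfrac{z\upsilon'(\tfrac{4}{3}+\tfrac{4}{3}\upsilon)}{1+\tfrac{4}{3}\upsilon+\tfrac{2}{3}\upsilon^2}-\alpha\right| > 0$ for $r$ small enough.

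The execution splits into three estimates. First, I would find $\min_{|\upsilon|=R}\RE(1+\tfrac{4}{3}\upsilon+\tfrac{2}{3}\upsilon^2)$; writing $\upsilon=Re^{it}$ and minimizing $1+\tfrac{4}{3}R\cos t+\tfrac{2}{3}R^2\cos 2t$ over $t$ should yield the lower bound $1-r^2$ appearing in the target polynomial (the value at $\cos t=-1$, i.e. $1-\tfrac{4}{3}R+\tfrac{2}{3}R^2$, versus the interior critical point; the stated answer $1-r^2$ signals that the relevant minimum over $R\le r$ is $1-r^2$, attained at an interior angle). Second, for the numerator factor I would bound $|\tfrac{4}{3}+\tfrac{4}{3}\upsilon| = \tfrac{4}{3}|1+\upsilon| \le \tfrac{4}{3}(1+r)$, which produces the $(1+r)$ factor and the $\tfrac{4}{3}$ that combines with $\sqrt{3}$ in the constant $6\sqrt{3}$. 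Third, for the denominator I would compute $\min_{|\upsilon|=R}|1+\tfrac{4}{3}\upsilon+\tfrac{2}{3}\upsilon^2|$; factoring the quadratic and minimizing its modulus on $|\upsilon|=r$ should give a bound of the form $c(3-2r^2)/3$, supplying the $(3-2r^2)$ factor. Combining these with the Schwarz--Pick bound~\eqref{sch}, namely $|z\upsilon'(z)|/(1-|\upsilon|) \le r/(1-r)$, and clearing denominators reduces the positivity of~\eqref{class} to the single polynomial inequality $(3-2r^2)(1-r^2-2\alpha)-6\sqrt{3}\,r(1+r) > 0$, so that $\tilde r(\alpha)$ is its smallest positive root.

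The main obstacle will be the two extremal computations over the circle $|\upsilon|=R$: both $\RE(1+\tfrac{4}{3}\upsilon+\tfrac{2}{3}\upsilon^2)$ and $|1+\tfrac{4}{3}\upsilon+\tfrac{2}{3}\upsilon^2|$ are genuinely quadratic in $\upsilon$, so their min/max over $t$ need a careful case analysis in $\cos t$ (interior critical point versus endpoints), and one must verify that the worst case over $R\le r$ is attained at $R=r$ to legitimately replace $R$ by $r$. I expect the constant $6\sqrt{3}$ to emerge precisely from reconciling the $\tfrac{4}{3}(1+r)$ numerator bound against the factored denominator minimum; getting that constant right is the delicate bookkeeping step. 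Once all three estimates are in hand, the reduction to the stated polynomial and the appeal to the Intermediate Value Theorem for the root $\tilde r(\alpha)$ are routine, exactly as in the preceding radius theorems. The restriction $\alpha<3/4$ enters to keep $1-r^2-2\alpha$ from forcing the polynomial negative at $r=0$ (where it equals $3(1-2\alpha)$), guaranteeing a genuine positive root in $(0,1)$.
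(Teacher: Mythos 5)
Your overall architecture matches the paper's: the same reduction of \eqref{class} to three circle estimates (the minimum of $\RE\bigl(1+\tfrac{4\upsilon+2\upsilon^{2}}{3}\bigr)$, the numerator bound $\tfrac{4}{3}|1+\upsilon|\le\tfrac{4}{3}(1+r)$, and the minimum modulus of the denominator, which the paper computes as $(3-2R^{2})/\sqrt{27}$, whence the factor $3-2r^{2}$), followed by clearing denominators. The genuine gap is your choice of derivative bound. The quantity to control is $|z\upsilon'(z)|\,(1+|\upsilon(z)|)$ --- unlike in the $\mathcal{SL}^{*}$ and $\mathcal{S}^{*}_{l}$ theorems, no factor $(1-|\upsilon|)$ sits in the denominator to pair with Schwarz--Pick --- so \eqref{sch} only yields $|z\upsilon'|(1+|\upsilon|)\le r(1-|\upsilon|^{2})/(1-r)\le r/(1-r)$, which exceeds $r(1+r)$ and, after clearing denominators, produces a polynomial carrying an extra factor of $(1-r)$ rather than the stated $(3-2r^{2})(1-r^{2}-2\alpha)-6\sqrt{3}\,r(1+r)$. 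What the paper uses (and what Theorems~\ref{ethm} and~\ref{sigthm}, your declared template, also use) is Dieudonn\'e's inequality \eqref{der}: $|\upsilon'(z)|\le 1$ for $|z|\le\sqrt{2}-1$, giving $|z\upsilon'|(1+|\upsilon|)\le r(1+r)$ exactly, with the a~posteriori check that $\tilde r(\alpha)\le\sqrt{2}-1$ so only that branch is needed.

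Two smaller points. The lower bound for the real part is $\tfrac{2}{3}(1-r^{2})$, not $1-r^{2}$; the $\tfrac{2}{3}$ is absorbed when you multiply through by $\tfrac{3}{2}(3-2r^{2})$. More importantly, your explanation of the restriction $\alpha<3/4$ is not right: as you yourself compute, the polynomial equals $3(1-2\alpha)$ at $r=0$, which is positive only for $\alpha<1/2$, so the intermediate-value argument you describe does not start for $\alpha\in[1/2,3/4)$. In the paper the threshold $3/4$ arises from treating $R=0$ separately, where $\RE\bigl(1+\tfrac{4\upsilon+2\upsilon^{2}}{3}\bigr)=1$ rather than the generic lower bound $\tfrac{2}{3}(1-R^{2})$ (the latter is the interior-critical-point value, attained only for $R\ge 1/2$), so the $r=0$ condition is $1-\tfrac{4\alpha}{3}>0$. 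Be aware that this case analysis in $R$ is exactly the ``careful case analysis in $\cos t$'' you anticipated, and it is where the constant $3/4$ actually lives.
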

	\begin{proof}
		For $f \in \mathcal{S}^*_{C}$, there exists a Schwarz function $\upsilon(z) = R e^{it}$ $(0\leq R\leq r=|z|<1;0\leq t\leq 2\pi)$ such that $z f'(z)/f(z)=1+\tfrac{4 \upsilon(z)+2 \upsilon^2(z)}{3}$. We consider
		\begin{align}\label{SC}\RE\left(1+\dfrac{4 \upsilon(z)+2\upsilon^2(z)}{3}\right)-\dfrac{4}{3}\left|\dfrac{z\upsilon'(z)(1+\upsilon(z))}{1+\dfrac{4\upsilon(z)+2\upsilon^2(z)}{3}}-\alpha\right| \geq & \RE\left(1+\dfrac{4 \upsilon(z)+2\upsilon^2(z)}{3}\right)  \nonumber \\ & -\dfrac{4\alpha}{3}-\dfrac{4}{3}\left(\dfrac{|z||\upsilon'(z)|(1+|\upsilon(z)|)}{|1+\dfrac{4\upsilon(z)+2\upsilon^2(z)}{3}|}\right). \end{align}
		Let $\RE\left(1+\tfrac{4 \upsilon(R e^{it})+2\upsilon^2(R e^{it})}{3}\right)=:\kappa(R,t)$, then a calculation shows that $\kappa_t(R,t)=0$ at either $R=0$ or $t=0,~\pi,~2\pi,~\arccos(-1/(2R))$. Evaluating $\kappa(R,t)$ at these values of $R$ and $t$, we get
		$$\min_{0<R\leq r}\kappa(R,t)= \dfrac{2}{3}\left(1-R^2\right) \geq \dfrac{2}{3}\left(1-r^2\right) \text{ and } \kappa(0,t)= 1.$$ Similarly, we calculate
		$$\min_{|\upsilon(z)|=R\neq 0} \left|1+\dfrac{4\upsilon(z)+2\upsilon^2(z)}{3}\right|= \dfrac{3-2R^2}{\sqrt{27}} \geq \dfrac{3-2r^2}{\sqrt{27}}~(r\neq 0) \text{ and } \left|1+\dfrac{4\upsilon(z)+2\upsilon^2(z)}{3}\right|\bigg|_{R=0}=1.$$
		Using these inequalities in right side of the inequality~\eqref{SC} and further applying~\eqref{der} with $|z|=r$,~\eqref{SC} finally reduces to
		$$ \RE\left(1+\dfrac{4 \upsilon(z)+2\upsilon^2(z)}{3}\right)-\dfrac{4}{3}\left|\dfrac{z\upsilon'(z)(1+\upsilon(z))}{1+\dfrac{4\upsilon(z)+2\upsilon^2(z)}{3}}-\alpha\right| \geq \begin{cases}\dfrac{2}{3}(1-r^2)-\dfrac{4\alpha}{3}-\dfrac{4\sqrt{3}r(1+r)}{3-2r^2} & r \neq 0 \\ 1-\dfrac{4\alpha}{3} & r=0,\end{cases}$$ greater than $0$ provided $r<\tilde{r}(\alpha) \leq \sqrt{2}-1$ and $0\leq\alpha<3/4$.
	\end{proof}
	\begin{remark}
		Interestingly in Theorem~\ref{ssthm}--Theorem~\ref{sc}, the respective root $\tilde{r}(\alpha)$ lies in  $(0, \sqrt{2}-1]$.
	\end{remark}
	We carry on with this section by the following radius problems for $\mathcal{S}^*(q_\alpha)$.
	\begin{theorem}\label{rad}
		Let $f \in \mathcal{S}^*(q_\alpha)$ $(0 \leq \alpha <1)$. Then the following holds:
		\begin{itemize}
			\item[(i)] $f$ is starlike of order $\gamma$ in $|z| < \tilde{r}$ whenever \\
			$(a)$ $\tfrac{2 \alpha-1}{2(1-2^{1-2 \alpha})} < \gamma <1$  for $\alpha \neq 1/2$,  where $\tilde{r}$ is the smallest such $r<1$ satisfying the equation 
			\begin{equation*}
				(2\alpha-1)r-\gamma(1+r)(1-(1+r)^{1-2\alpha})=0, \end{equation*} or whenever\\
			$(b)$ $\tfrac{1}{\log 4} < \gamma <1$ for $\alpha =1/2$, 
			where $\tilde{r}$ is the smallest such $r<1$ satisfying the equation 
			\begin{equation*}
				r-\gamma(1+r)\log(1+r)=0.
			\end{equation*} 
			\item[(ii)] Let $\beta >1$ then $f \in \mathcal{M}(\beta)$ in $|z| <r_0$ if there exists $r_0$, the smallest such $r<1$ satisfying the equation 
			\begin{equation*}
				(1-2\alpha)r-\beta(1-r)(1-(1-r)^{1-2\alpha})=0~(\alpha\neq 1/2)\quad \text{ or } \quad 
				r+\beta(1-r)(\log(1-r))=0~(\alpha = 1/2),	
			\end{equation*} else $r_0 =1$.
		\end{itemize}
	\end{theorem}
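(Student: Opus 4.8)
The plan is to exploit the extremal character of $q_\alpha$ established earlier, namely that $\min_{|z|=r}\RE q_\alpha(z)=q_\alpha(-r)$ and $\max_{|z|=r}\RE q_\alpha(z)=q_\alpha(r)$ (recorded in Remark~\ref{cor1}), together with the subordination $zf'(z)/f(z)\prec q_\alpha(z)$ defining $\mathcal{S}^*(q_\alpha)$. The key observation is that subordination localizes: if $zf'(z)/f(z)\prec q_\alpha(z)$ on $\mathbb{D}$, then on the subdisk $|z|\le r$ the values of $zf'(z)/f(z)$ lie in $q_\alpha(\overline{\mathbb{D}_r})=q_\alpha(\{|z|\le r\})$, and in particular $\RE(zf'(z)/f(z))\ge q_\alpha(-r)$ there. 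This reduces both parts to understanding the real-part extrema of $q_\alpha$ on circles, which are already known to be attained at $\pm r$.

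For part (i), I would argue that $f$ is starlike of order $\gamma$ in $|z|<\tilde r$ precisely when $\RE(zf'(z)/f(z))>\gamma$ there, and by the localized subordination it suffices to ensure $q_\alpha(-r)\ge\gamma$ for $r<\tilde r$. First I would compute $q_\alpha(-r)$ explicitly from the closed form in~\eqref{subor}: substituting $z=-r$ gives $q_\alpha(-r)=(1-2\alpha)(-r)/((1+r)(1-(1+r)^{1-2\alpha}))$ for $\alpha\ne1/2$, and the analogous logarithmic expression for $\alpha=1/2$. Setting $q_\alpha(-r)=\gamma$ and clearing denominators yields exactly the defining equations $(2\alpha-1)r-\gamma(1+r)(1-(1+r)^{1-2\alpha})=0$ and $r-\gamma(1+r)\log(1+r)=0$. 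Since $q_\alpha(-r)$ is a continuous decreasing function of $r$ starting at $q_\alpha(0)=1$ and decreasing toward the limiting value $q_\alpha(-1)=\tfrac{2\alpha-1}{2(1-2^{1-2\alpha})}$ (resp.\ $1/\log 4$), the hypothesis $\gamma$ strictly above this limiting value guarantees a genuine root $\tilde r\in(0,1)$, and for $r<\tilde r$ we have $q_\alpha(-r)>\gamma$, giving the radius.

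For part (ii), the class $\mathcal{M}(\beta)$ requires $\RE(zf'(z)/f(z))<\beta$, so symmetrically I would use the upper extremum $\max_{|z|=r}\RE(zf'(z)/f(z))\le q_\alpha(r)$ and demand $q_\alpha(r)\le\beta$. Evaluating the closed form at $z=+r$ produces $q_\alpha(r)=(1-2\alpha)r/((1-r)(1-(1-r)^{1-2\alpha}))$ (resp.\ $-r/((1-r)\log(1-r))$), and setting $q_\alpha(r)=\beta$ gives the stated equations. Here $q_\alpha(r)$ increases from $1$ and blows up as $r\to1^-$, so if $\beta>1$ a crossing $r_0$ exists and $f\in\mathcal{M}(\beta)$ for $|z|<r_0$; if no crossing occurs in $(0,1)$ one sets $r_0=1$.

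The main obstacle I anticipate is verifying the monotonicity and the precise limiting behavior of $r\mapsto q_\alpha(\pm r)$ rigorously, since the closed forms involve the factor $(1\mp r)^{1-2\alpha}$ whose sign and growth depend delicately on whether $\alpha<1/2$ or $\alpha>1/2$; the cleanest route is to lean on the already-cited MacGregor facts that $q_\alpha$ is univalent with $\min_{|z|=r}\RE q_\alpha=q_\alpha(-r)$ and $\max_{|z|=r}\RE q_\alpha=q_\alpha(r)$, so that the only remaining work is the elementary algebra of clearing denominators in $q_\alpha(\mp r)=\gamma$ (resp.\ $q_\alpha(r)=\beta$) and an intermediate-value argument for root existence. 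The sharpness, if claimed, would follow by testing the extremal function $f_\alpha$ at $z=\pm\tilde r$.
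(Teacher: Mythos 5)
Your proposal is correct and follows essentially the same route as the paper: both use the subordination to bound $\RE(zf'(z)/f(z))$ between $q_\alpha(-r)$ and $q_\alpha(r)$ on $|z|=r$, identify the stated equations by clearing denominators in $q_\alpha(\mp r)=\gamma$ (resp.\ $q_\alpha(r)=\beta$), and obtain the radius by an intermediate-value argument using the endpoint values at $r=0$ and $r=1$.
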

	\begin{proof}
		As $f \in \mathcal{S}^*({q_\alpha})$, then $f$ satisfies the subordination~\eqref{qa}.  
		
		(i) Let $\alpha \neq 1/2$. Thus we have \begin{equation*}
			\RE\dfrac{z f'(z)}{f(z)} \geq \dfrac{(2\alpha-1)r}{(1+r)(1-(1+r)^{1-2\alpha})}=:\rho(r,\alpha), \quad |z|=r<1.
		\end{equation*}
		We find $r \in [0,1)$ such that $\rho(r,\alpha) > \gamma$ for given $\alpha$ and $\gamma$. Let $\tilde{\rho}(r,\alpha,\gamma) := (2\alpha-1)r/((1+r)(1-(1+r)^{1-2\alpha}))-\gamma$. Now, we observe $\tilde{\rho}(0,\alpha,\gamma)=1-\gamma>0$ and $\tilde{\rho}(1,\alpha,\gamma)= \tfrac{2 \alpha-1}{2(1-2^{1-2 \alpha})}-\gamma < 0$ for every $\gamma$ in the given range. So, there must exist $\tilde{r}$ such that $\tilde{\rho}(r,\alpha,\gamma) \geq 0$ for all $r \in (0,\tilde{r}]$, where $\tilde{r}$ is as described in the hypothesis. Similarly we can prove the result for $\alpha=1/2$. This completes the proof for part (i).\\
		(ii) Let $\alpha\neq1/2$. We have
		\begin{equation*}
			\RE\dfrac{z f'(z)}{f(z)} \leq \dfrac{(1-2\alpha)r}{(1-r)(1-(1-r)^{1-2\alpha})}=:\mu(r,\alpha), \quad |z|=r<1.
		\end{equation*}
		Now, to find $r \in [0,1)$ such that $\mu(r,\alpha)-\beta<0$ for given $\alpha$ and $\beta$, let us assume $$\tilde{\mu}(r,\alpha,\beta):=	\dfrac{(1-2\alpha)r}{(1-r)(1-(1-r)^{1-2\alpha})}-\beta.$$ Clearly, $\tilde{\mu}(0,\alpha,\beta) < 0$ and suppose $\tilde{r}:=1-\epsilon$, $(\epsilon \approx 0)$, then $\mu(r,\alpha) >0$, which implies  either $\tilde{\mu}(\tilde{r},\alpha,\beta) 
		<0$ or $\tilde{\mu}(\tilde{r},\alpha,\beta) >0$ depending on the value of $\beta$. Thus, there must exist $r_0$ such that $\tilde{\mu}(r,\alpha\,\beta) \leq 0$ for all $r\in (0,r_0]$, where $r_0$ is as defined in the hypothesis. On similar lines, proof follows for $\alpha=1/2$, concluding the proof for part (ii).\\ 
	\end{proof}
	\begin{remark}
		In Theorem~\ref{rad}(i), we exclude the case when $0 \leq \gamma \leq \tfrac{2 \alpha-1}{2(1-2^{1-2 \alpha})}$ for $\alpha \neq 1/2$ and $0 \leq \gamma \leq \tfrac{1}{\log 4}$ for $\alpha=1/2$ as for these ranges of $\gamma$, the result holds in $|z| <1$ and it becomes the inclusion result instead, given in Theorem~\ref{inc}.  
	\end{remark}
	\begin{theorem}\label{rad2}
		Let $f \in \mathcal{S}^*(q_\alpha)$ and $\beta \geq 1.$ Then for $|z|<\tilde{r}$, we have
		$$\left|\dfrac{z f'(z)}{f(z)}-1\right| < \beta,$$where  $\gamma:=\gamma(\alpha)$ given by~\eqref{gamma} and $\tilde{r}$ is the smallest such $r<1$ satisfying the equation
		\begin{equation*}
			2(1-\gamma)r-\beta(1-r)(1-|1-2\gamma|r)=0.
		\end{equation*}
	\end{theorem}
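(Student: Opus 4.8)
The plan is to exploit the result from Theorem~\ref{gammathm}, which tells us that $f \in \mathcal{S}^*(q_\alpha)$ implies $\RE(f(z)/z) > \gamma(\alpha)$, so that the function $f(z)/z$ maps $\mathbb{D}$ into the half-plane $\{w : \RE w > \gamma\}$. The quantity we must bound, $|zf'(z)/f(z) - 1|$, can be rewritten in terms of $p(z) := (f(z)/z - \gamma)/(1-\gamma)$, which is an analytic function with $p(0)=1$ and $\RE p(z) > 0$, i.e. a Carath\'eodory-type function. First I would express $zf'(z)/f(z)$ in terms of $p$ and $zp'(z)$ exactly as in the proof of Theorem~\ref{gammathm}, namely
\begin{equation*}
	\frac{zf'(z)}{f(z)} - 1 = \frac{(1-\gamma)\,zp'(z)}{(1-\gamma)p(z)+\gamma}.
\end{equation*}

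Next I would invoke the standard sharp bounds for functions of positive real part. Writing $p \prec (1+z)/(1-z)$, the classical estimates give $|zp'(z)| \le \dfrac{2r}{1-r^2}\,\RE p(z)$ and the disk-containment bound locating $p(z)$ within the hyperbolic disk centered appropriately; more directly, one uses the known bound $|p(z)| $ and $|zp'(z)|$ in terms of $r=|z|$. The denominator $(1-\gamma)p(z)+\gamma$ equals $f(z)/z$, whose modulus is bounded below using the growth/containment of $p$ in the right half-plane. The appearance of $|1-2\gamma|$ in the target equation strongly suggests that the governing estimate is the sharp two-point distortion bound
\begin{equation*}
	\left|\frac{zp'(z)}{p(z)+\frac{\gamma}{1-\gamma}}\right| \le \frac{2r}{(1-r)(1-|1-2\gamma|\,r)},
\end{equation*}
which is exactly the Carath\'eodory bound for the logarithmic derivative of a function subordinate to a half-plane map with the relevant offset. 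Combining these, the left side is bounded by $\dfrac{2(1-\gamma)r}{(1-r)(1-|1-2\gamma|r)}$, and requiring this to be less than $\beta$ rearranges precisely to $2(1-\gamma)r - \beta(1-r)(1-|1-2\gamma|r) < 0$, giving the root $\tilde r$ in the statement.

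The key steps in order are therefore: (1) substitute the Carath\'eodory normalization $p$ and rewrite $zf'(z)/f(z)-1$ as a ratio involving $zp'(z)$; (2) apply the sharp bounds $\RE p(z) \ge (1-r)/(1+r)$ and the derivative estimate for $p$, carefully tracking the shift by $\gamma/(1-\gamma)$ that produces the $|1-2\gamma|$ factor; (3) assemble the inequality and show that the resulting expression is a decreasing-to-negative function of $r$, so that the smallest positive root $\tilde r$ of $2(1-\gamma)r-\beta(1-r)(1-|1-2\gamma|r)=0$ delimits the radius where the bound $\beta$ holds; and (4) verify the boundary behavior at $r=0$ (where the left side vanishes, comfortably below $\beta \ge 1$) and near $r=1$ to confirm a sign change and hence existence of $\tilde r$.

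The main obstacle I anticipate is getting the derivative estimate with the correct offset, since the clean Carath\'eodory bound $|zp'(z)| \le 2r\,\RE p(z)/(1-r^2)$ must be combined with a lower bound on $|(1-\gamma)p(z)+\gamma|$ in a way that yields exactly the factor $(1-|1-2\gamma|r)$ rather than some weaker estimate. This is essentially the assertion that the image of $\mathbb{D}$ under $zf'(z)/f(z)$ lies in a disk whose radius is controlled by the distance from the half-plane boundary, and making the two-sided estimate sharp enough to match the stated equation is the delicate point; I would handle it by directly using the subordination $f(z)/z \prec f_\alpha(z)/z$ together with Schwarz--Pick on the Schwarz function, mirroring the technique already deployed in the radius theorems of this section.
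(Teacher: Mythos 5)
Your proposal follows essentially the same route as the paper: both use Theorem~\ref{gammathm} to place $f(z)/z$ in the half-plane $\RE w>\gamma$ (you via the Carath\'eodory normalization $p$, the paper via the equivalent Schwarz-function representation $f(z)/z=(1+(1-2\gamma)\upsilon(z))/(1-\upsilon(z))$), then logarithmically differentiate, apply the triangle inequality together with Schwarz--Pick to reach the bound $2(1-\gamma)r/((1-r)(1-|1-2\gamma|r))$, and locate $\tilde r$ as the root of the stated equation. The only blemish is a stray factor of $(1-\gamma)$ in your displayed intermediate inequality (the left side there already equals $|zf'(z)/f(z)-1|$), but your final assembled bound is the correct one, so this is a bookkeeping slip rather than a gap.
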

	\begin{proof}
		As $f \in \mathcal{S}^*(q_\alpha)$, thus from~\eqref{subor} we deduce the following
		\begin{equation*}
			\dfrac{f(z)}{z} = \dfrac{1+(1-2\gamma)\upsilon(z)}{1-\upsilon(z)},\end{equation*} for some Schwarz function $\upsilon(z)$. Logarithmic differentiating the above equation yields
		\begin{equation*}\dfrac{z f'(z)}{f(z)}-1=\dfrac{2(1-\gamma)z \upsilon'(z)}{(1+(1-2\gamma)\upsilon(z))(1-\upsilon(z))}.\end{equation*}
		Using the triangle inequality on the modulus of above equation, we get
		\begin{equation}\label{zf}
			\left|\dfrac{z f'(z)}{f(z)}-1\right|\leq\dfrac{2(1-\gamma)|z| |\upsilon'(z)|}{(1-|1-2\gamma||\upsilon(z)|)(1-|\upsilon(z)|)}
		\end{equation}
		Upon applying the Schwarz-Pick inequality  $$|\upsilon'(z)| \leq \dfrac{1-|\upsilon(z)|^2}{1-|z|^2} \quad (z \in \mathbb{D})$$ and $|\upsilon(z)|\leq |z|$ in equation~\eqref{zf}, we deduce
		\begin{align*}\left|\dfrac{zf'(z)}{f(z)}-1\right|& \leq \dfrac{2(1-\gamma)|z| (1+|z|)}{(1-|1-2\gamma||z|)(1-|z|^2)}\\ & = \dfrac{2(1-\gamma)r }{(1-|1-2\gamma|r)(1-r)}=:u(r,\gamma).\end{align*}
		Now to show $u(r,\gamma)-\beta <0$. For this consider, $\tilde{u}(r,\gamma,\beta):=2(1-\gamma)r-\beta(1-r)(1-|1-2\gamma|r)$. We observe $\tilde{u}(0,\gamma,\beta)=-\beta<0$ and $\tilde{u}(1,\gamma,\beta)=2(1-\gamma)>0$. Thus, there must exist $\tilde{r}$ such that $\tilde{u}(r,\gamma,\beta)\leq 0$ for all $r \in [0,\tilde{r}]$, where $\tilde{r}$ is defined in the hypothesis. Hence the result.		
	\end{proof}
	\begin{remark}
		Since $\mathcal{S}_q^*(\alpha) \subset \mathcal{S}^*(q_\alpha)$, results in Theorem~\ref{rad}--Theorem~\ref{rad2} hold even for $\mathcal{S}_q^*(\alpha)$, however, they need not be sharp.
	\end{remark}
	\subsection*{Acknowledgment}
	The second author is supported by a Research Fellowship
	from the Department of Science and Technology, New Delhi, Grant No.~IF170272.

\end{document}